\DeclareMathOperator{\rank}{rank}
\DeclareMathOperator{\Col}{Col}
\DeclareMathOperator{\Row}{Row}
\DeclareMathOperator{\lcm}{lcm}
\def\cal{\mathcal}
\def\diag{diag}
\def\ra{\rightarrow}
\def\a{\alpha}
\def\b{\beta}
\def\d{\delta}
\def\D{\Delta}
\def\0{{\bf 0}}
\def\1{{\bf 1}}
\newcommand{\R}{{\mathbb R}}
\def\dsum{\mathop{\sum}\limits}
\newtheorem{thm}{Theorem}[section]
\newtheorem{dfn}[thm]{Definition}
\newtheorem{prp}[thm]{Proposition}
\newtheorem{exa}[thm]{Example}
\newtheorem{lem}[thm]{Lemma}
\newtheorem{cor}[thm]{Corollary}
\newtheorem{rem}[thm]{Remark}
\newtheorem{proof}[thm]{Proof}
\begin{document}

\title{Matrix Expression of Finite Boolean-type Algebras}

\author{Daizhan Cheng\dag\ddag,~~Jun-e Feng\dag$^*$,~~Jianli Zhao\dag,~~Shihua Fu\dag
	\thanks{This work is supported partly by the National Natural Science Foundation of China (NSFC) under Grants  61773371 and 61733018.}
    \thanks{\dag STP Center, Liaocheng University, Liaocheng, Shangdong Province, \ddag Key Laboratory of Systems and Control, AMSS, Chinese Academy of Sciences, $^*$School of Mathematics, Shandong University, P. R. China (e-mail: dcheng@iss.ac.cn, fengjune@sdu.edu.cn, zhaojl1964@126.com, fush\_shanda@163.com).}
    \thanks{Corresponding author: Daizhan Cheng. Tel.: +86 10 82541232.}
}

\maketitle

\begin{abstract}
Boolean-type algebra (BTA) is investigated. A BTA is decomposed into Boolean-type lattice (BTL) and a complementation algebra (CA). When the object set is finite, the matrix expressions of BTL and CA (and then BTA) are  presented. The construction and certain properties of BTAs are investigated via their matrix expression, including  the homomorphism and isomorphism, etc. Then the product/decomposition of BTLs are considered. A necessary and sufficient condition for decomposition of BTA is obtained. Finally, a universal generator is provided for arbitrary finite universal algebras.
\end{abstract}

\begin{IEEEkeywords}
Boolean-type algebra, lattice, complementation, universal algebra, semi-tensor product of matrices.
\end{IEEEkeywords}

\IEEEpeerreviewmaketitle

\section{Introduction}

Boolean algebra (BA) was firstly proposed by George Boole in mid of 19 century \cite{boo1847,boo1854}. ``Boolean algebra lay dormant until 1939, when Shannon discovered that it was the appropriate language for describing digital switching circuit, Boole's work thus became an essential tool in the modern development of electronics and digital computer technology", and Boole is ``remembered as the father of symbolic logic and one of the founders of computer science" \cite{gow08}.
BA is fundamental to computer circuits, computer programming, and mathematical logic, it is also used in other areas of mathematics such as set theory and statistics \cite{giv09}.

Unfortunately, many useful algebraic objects, which have similar properties as those of BA, are not BA, because only the ``complementation laws" are not satisfied. For instance, $k$-valued logic \cite{luo92}, fuzzy logic \cite{pas98}, mini-max algebra \cite{cun79}, etc. From universal algebra point of view \cite{bur81}, a BA is a composition of a lattice with a complement. Lattice is an algebra of type  $T_{\ell}=(2,2,0,0)$, complement is an algebra of type of $T_c=(1,0,0)$, and hence a BA is an algebra of type $T_b:=(2,2,1,0,0)$. For statement ease, an algebra of type $T_b:=(2,2,1,0,0)$ is called a Boolean-type algebra (BTA). In addition to BA, there  are several other well established algebras belonging to BTA, e.g., De Morgan algebra, Kleene Algebra, Pseudo Algebra, Stone algebra \cite{luo05}, etc. The purpose of this paper is to provide a matrix technique to formulate and investigate them.


Semi-tensor product (STP) is a newly established matrix product, which has been successfully used to analysis and control of $k$-valued network, including Boolean network as its special case \cite{che11}. It is expected that STP is also useful in investigating general BTAs. That is the motivation of this paper.

This paper concerns only finite BTAs. Using STP,  matrix expressions have been proposed for two kinds of finite algebras: (i) lattices, including simple lattice, distributive lattice, and bounded distributive lattice; (ii) compliments, including De Morgan's complement, Kleene's complement, Pseudo complement, and Stone's complement, etc. Putting them together, we have matrix expressions of BTAs. Using matrix expressions, certain basic properties of BTAs are investigated.

Then the decomposition of a BTA is considered, which means decomposing a BTA into a product of two BTAs. Based on the matrix expressions of its operators, a necessary and sufficient condition is provided, which is straightforward verifiable.

Finally, as an application of the matrix expression of finite BTAs, we provide a general generator for arbitrary finite universal algebra. The generator is a BTL, with free complements.

The rest of this paper is organized as follows: Section 2 briefly reviews STP and the matrix expression of $k$-valued logical functions. Section 3 considers the matrix expression of finite BTAs, consisting of matrix expressions of finite BTLs and CAs. Some examples are included. In Section 4 the homomorphism and isomorphism of finite BTAs are investigated via their matrix expressions. Section 5 considers the decomposition of BTAs. Necessary and sufficient condition is presented. In Section 6, a set of BTAs with free complements are presented as a universal generator of all finite valued universal algebras. Section 7 is a brief conclusion with a suggestion of some problems for further study.

Before ending this section, a list of notations is presented as follows:

\begin{enumerate}

\item $\R^n$: $n$ dimensional Euclidean space.

\item  ${\cal M}_{m\times n}$: the set of $m\times n$ real matrices.

\item $\Col(M)$ ($\Row(M)$): the set of columns (rows) of $M$. $\Col_i(M)$ ($\Row_i(M)$): the $i$-th column (row) of $M$.

\item ${\cal D}:=\{0,1\}$.


\item $\d_n^i$: the $i$-th column of the identity matrix $I_n$.

\item $\D_n:=\left\{\d_n^i\vert i=1,\cdots,n\right\}$; $\D :=\D_2$.

\item ${\bf 1}_{\ell}:=(\underbrace{1,1,\cdots,1}_{\ell})^{\mathrm{T}}$.

\item $\ltimes$: semi-tensor product of matrices. (The symbol $\ltimes$ is mostly omitted. Hence, throughout this paper
$AB:=A\ltimes B$.)

\item ${\bf S}_k$: the $k$-th order symmetric group.

\item $\sqcap$: intersection of lattice.

\item $\sqcup$: union of lattice.

\item $*$: Khatri-Rao product of matrices.

\item A matrix $L\in {\cal M}_{m\times n}$ is called a logical matrix
if $\Col(L)\subset \D_m$.
Denote by ${\cal L}_{m\times n}$ the set of $m\times n$ logical
matrices.

\item If $L\in {\cal L}_{n\times r}$, by definition it can be expressed as
$L=[\d_n^{i_1},\d_n^{i_2},\cdots,\d_n^{i_r}]$. For the sake of
compactness, it is briefly denoted as $
L=\d_n[i_1,i_2,\cdots,i_r]$.

\end{enumerate}

\section{STP and Its Application to $k$-valued Logic}

\subsection{Semi-tensor Product of Matrices}

This subsection provides a brief survey on STP of matrices. We refer to \cite{che12} for all the concepts/results involved in this paper.

\begin{dfn} \label{d3.1.1}
Let $M\in {\cal M}_{m\times n}$, $N\in {\cal M}_{p\times q}$, and $t=\lcm(n,p)$ be the least common multiple of $n$ and $p$.
The STP of $M$ and $N$ is defined as
\begin{align}\label{3.1.1}
M\ltimes N:= \left(M\otimes I_{t/n}\right)\left(N\otimes I_{t/p}\right)\in {\cal M}_{mt/n\times qt/p},
\end{align}
where $\otimes$ is the Kronecker product.
\end{dfn}

\begin{rem}\label{r3.1.2}
\begin{enumerate}
\item When $n=p$, $M\ltimes N=MN$. That is, STP is a generalization of conventional matrix product. Moreover, it keeps all the properties of conventional matrix product available.

\item Throughout this paper the matrix product is assumed to be STP and because of 1) the symbol ``$\ltimes$" is mostly omitted.
\end{enumerate}
\end{rem}

In the following we list some properties of STP, which will be used in the sequel.

\begin{prp}\label{p3.1.3}
\begin{itemize}
\item[(i)]~~ Associativity Law:
\begin{align}\label{3.1.2}
(A\ltimes B)\ltimes C=A\ltimes (B\ltimes C).
\end{align}

\item[(ii)]~~Distribution Laws:
\begin{align}\label{3.1.3}
\begin{cases}
(A+B)\ltimes C=A\ltimes C+B\ltimes C\\
A\ltimes (B+C)=A\ltimes B+A\ltimes C.
\end{cases}
\end{align}

\item[(iii)]~~
\begin{align}\label{3.1.4}
(A\ltimes B)^T=B^T\ltimes A^T.
\end{align}

\item[(iv)]~~Assume $A$ and $B$ are invertible, then
\begin{align}\label{3.1.5}
(A\ltimes B)^{-1}=B^{-1}\ltimes A^{-1}.
\end{align}

\item[(v)]~~Assume $x\in \R^t$ is a column vector, $A$ is an arbitrary matrix, then
\begin{align}\label{3.1.6}
x\ltimes A=\left(I_t\otimes A\right)\ltimes x.
\end{align}

\end{itemize}
\end{prp}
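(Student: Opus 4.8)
The plan is to derive every item from the corresponding property of the ordinary matrix product and the Kronecker product, since by Definition~\ref{d3.1.1} the STP is nothing but a Kronecker padding followed by ordinary multiplication. The single tool that does almost all the work is the mixed-product rule $(P\otimes Q)(R\otimes S)=(PR)\otimes(QS)$ (valid whenever $PR$ and $QS$ are defined), together with the bookkeeping identities $I_a\otimes I_b=I_{ab}$, $(P\otimes I_k)(Q\otimes I_k)=(PQ)\otimes I_k$, and $(P\otimes I_a)\otimes I_b=P\otimes I_{ab}$. The first thing I would do is record these as a preliminary remark, because each of (i)--(v) then becomes a matter of writing out the definition and collapsing the result with these rules.

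For the four ``algebraic'' laws the computation is short. For (ii), I would expand $(A+B)\ltimes C=((A+B)\otimes I_{t/n})(C\otimes I_{t/p})$ with $t=\lcm(n,p)$, use bilinearity of $\otimes$ and distributivity of ordinary multiplication, and read off $A\ltimes C+B\ltimes C$; note that $A$ and $B$ share the column dimension $n$, so the same $t$ governs all three products. For (iii) and (iv) I would transpose (resp. invert) the two padded factors, apply $(P\otimes Q)^{T}=P^{T}\otimes Q^{T}$ (resp. $(P\otimes Q)^{-1}=P^{-1}\otimes Q^{-1}$) and $(PQ)^{T}=Q^{T}P^{T}$ (resp. $(PQ)^{-1}=Q^{-1}P^{-1}$), and observe that the least common multiple is symmetric, $\lcm(n,p)=\lcm(p,n)$, so the padded factors produced on the two sides coincide term by term. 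For (v), with $x$ of size $t\times 1$ and $A$ of size $m\times n$, both $x\ltimes A$ and $(I_t\otimes A)\ltimes x$ reduce by the mixed-product rule to the single block column $x\otimes A$ (whose $i$-th block is $x_iA$), giving the identity.

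The genuine obstacle is (i), where the only real difficulty is dimensional bookkeeping: the two groupings use different least common multiples. Writing $A\in{\cal M}_{m\times n}$, $B\in{\cal M}_{p\times q}$, $C\in{\cal M}_{r\times s}$, and setting $t_1=\lcm(n,p)$, $t_2=\lcm(qt_1/p,\,r)$, $s_1=\lcm(q,r)$, $s_2=\lcm(n,\,ps_1/q)$, I would repeatedly apply $(PQ)\otimes I_k=(P\otimes I_k)(Q\otimes I_k)$ to flatten each side into a genuine threefold ordinary product,
\begin{align*}
(A\ltimes B)\ltimes C&=(A\otimes I_{t_2p/(qn)})(B\otimes I_{t_2/q})(C\otimes I_{t_2/r}),\\
A\ltimes(B\ltimes C)&=(A\otimes I_{s_2/n})(B\otimes I_{s_2/p})(C\otimes I_{s_2q/(pr)}).
\end{align*}
It then remains to match the padding exponents. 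Using that $\lcm$ distributes over positive rational scaling, $c\,\lcm(x,y)=\lcm(cx,cy)$, one computes $\tfrac{p}{q}\,t_2=\lcm(n,p,pr/q)=s_2$, whence $t_2p/(qn)=s_2/n$, $t_2/q=s_2/p$, and $t_2/r=s_2q/(pr)$. Thus the two displayed products are literally identical, and associativity of the STP follows from associativity of ordinary matrix multiplication. I expect this exponent-matching step --- together with checking that the intermediate dimensions are integers so that every padding is well defined --- to be the fiddly part; everything else is bilinearity and the mixed-product rule.
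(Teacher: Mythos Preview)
Your proposal is correct and in fact goes further than the paper: Proposition~\ref{p3.1.3} is stated there as a survey result without proof, with a blanket reference to \cite{che12}. So there is no ``paper's own proof'' to compare against; what you have written is a self-contained argument that the paper simply outsources.

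A brief remark on the one delicate point. Your treatment of (ii)--(v) is clean and exactly how these identities are established in the STP literature: pad with identity blocks via Definition~\ref{d3.1.1}, then invoke bilinearity of~$\otimes$, $(P\otimes Q)^{T}=P^{T}\otimes Q^{T}$, $(P\otimes Q)^{-1}=P^{-1}\otimes Q^{-1}$, and the mixed-product rule. For (i), your flattening of each side into a threefold ordinary product with paddings $I_{pt_2/(qn)},I_{t_2/q},I_{t_2/r}$ and $I_{s_2/n},I_{s_2/p},I_{qs_2/(pr)}$ is correct, and the key identity $\tfrac{p}{q}\,t_2=s_2$ does hold. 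The only thing I would tighten is the justification of ``$\lcm$ distributes over positive rational scaling'': as written it presumes an extension of $\lcm$ to positive rationals. This is legitimate (interpret $\lcm(a,b)$ as the positive generator of $a\mathbb{Z}\cap b\mathbb{Z}$, which makes sense for $a,b\in\mathbb{Q}_{>0}$ and is homogeneous of degree one), but you should say so explicitly, or else avoid the issue by clearing denominators first --- e.g.\ verify $pq\,t_2 = q^2 s_2/?$ no, more directly show $nq\mid p t_2$, $pq\mid p t_2$, $pr\mid p t_2$ and the analogous divisibilities for $q s_2$, concluding that both $p t_2$ and $q s_2$ equal $\lcm(nq,pq,pr)$. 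Either route closes the gap; the rest of your argument needs no change.
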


\begin{dfn}\label{d3.1.4}
\begin{align}\label{3.1.7}
W_{[m,n]}:=\left[I_n\otimes \d_m^1, I_n\otimes \d_m^2, \cdots, I_n\otimes \d_m^m\right]\in {\cal M}_{mn\times mn}
\end{align}
is called a swap matrix.
\end{dfn}

The following three propositions are used in the sequel.

\begin{prp}\label{p3.1.5}
\begin{align}\label{3.1.8}
W_{[m,n]}^T=W_{[m,n]}^{-1}=W_{[n,m]}.
\end{align}
\end{prp}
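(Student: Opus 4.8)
The plan is to exploit the fact that $W_{[m,n]}$ is a permutation matrix together with its characteristic ``swap'' action on Kronecker products. First I would read off the columns of $W_{[m,n]}$ directly from Definition \ref{d3.1.4}: the $j$-th block $I_n\otimes \d_m^j$ contributes the columns $\d_n^k\otimes \d_m^j$ for $k=1,\ldots,n$, so that column number $(j-1)n+k$ of $W_{[m,n]}$ equals $\d_n^k\otimes \d_m^j=\d_{mn}^{(k-1)m+j}$. As the pair $(j,k)$ runs over all of $\{1,\ldots,m\}\times\{1,\ldots,n\}$, these are precisely the $mn$ distinct standard basis vectors of $\R^{mn}$, each occurring exactly once. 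Hence $W_{[m,n]}$ is a permutation matrix, and a permutation matrix always satisfies $P^TP=I$, which immediately yields the first equality $W_{[m,n]}^T=W_{[m,n]}^{-1}$.

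For the second equality I would first record the swap property. Applying $W_{[m,n]}$ to the basis vector $\d_m^i\otimes \d_n^j=\d_{mn}^{(i-1)n+j}$ selects column $(i-1)n+j$ of $W_{[m,n]}$, which by the column bookkeeping above is $\d_n^j\otimes \d_m^i$. Thus $W_{[m,n]}(\d_m^i\otimes \d_n^j)=\d_n^j\otimes \d_m^i$, and by bilinearity $W_{[m,n]}(x\otimes y)=y\otimes x$ for every $x\in\R^m$ and $y\in\R^n$.

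The conclusion then follows by composing two swaps. Reading the same identity with $m$ and $n$ interchanged gives $W_{[n,m]}(y\otimes x)=x\otimes y$, so that $W_{[n,m]}W_{[m,n]}(x\otimes y)=W_{[n,m]}(y\otimes x)=x\otimes y$ for all $x,y$. Since the vectors $\d_m^i\otimes \d_n^j$ already form a basis of $\R^{mn}$, the composite $W_{[n,m]}W_{[m,n]}$ agrees with the identity on a spanning set, hence $W_{[n,m]}W_{[m,n]}=I_{mn}$ and therefore $W_{[n,m]}=W_{[m,n]}^{-1}$. Combining this with the first equality gives $W_{[m,n]}^T=W_{[m,n]}^{-1}=W_{[n,m]}$.

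As for the main obstacle: there is no deep difficulty here, since the whole argument is bookkeeping. The only point demanding genuine care is the index arithmetic translating between the block form $\d_n^k\otimes \d_m^j$ of the columns and the flat index $\d_{mn}^{(k-1)m+j}$, because a single off-by-one error or a transposed Kronecker-product convention would scramble the identification of columns and silently break the swap computation. I would therefore fix the Kronecker-product ordering convention once at the outset and double-check the swap identity on one explicit small case (say $m=n=2$) before trusting the general formula.
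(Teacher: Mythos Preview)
Your argument is correct: the column bookkeeping shows $W_{[m,n]}$ is a permutation matrix (giving $W_{[m,n]}^T=W_{[m,n]}^{-1}$), and the swap identity $W_{[m,n]}(x\otimes y)=y\otimes x$ together with its $[n,m]$ counterpart yields $W_{[n,m]}W_{[m,n]}=I_{mn}$, hence $W_{[n,m]}=W_{[m,n]}^{-1}$. The index computations you carry out are accurate under the paper's Kronecker convention $\d_p^i\otimes\d_q^j=\d_{pq}^{(i-1)q+j}$.

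As for comparison with the paper: there is nothing to compare against, because the paper does not prove Proposition~\ref{p3.1.5}. It is listed among several background facts about the swap matrix, all quoted from \cite{che12} without proof. Your write-up is the standard elementary verification and would serve perfectly well as the omitted argument; in fact the swap property you establish along the way is exactly the content of Proposition~\ref{p3.1.6}(i), so your proof also makes that later statement immediate.
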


\begin{prp}\label{p3.1.6}
\begin{itemize}
\item[(i)]~~Let $x\in \R^m$ and $y\in \R^n$ be two column vectors. Then
\begin{align}\label{3.1.9}
W_{[m,n]}xy =yx.
\end{align}
\item[(ii)]~~ Let $\xi\in \R^p$, $\eta\in\R^q$, $x\in \R^m$ and $y\in \R^n$ be four column vectors. Then
\begin{align}\label{3.1.10}
\left(I_p\otimes W_{[m,n]}\otimes I_q\right)\xi xy\eta =\xi yx\eta.
\end{align}
\end{itemize}
\end{prp}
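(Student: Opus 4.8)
The plan is to reduce both identities to a single computation, namely the action of $W_{[m,n]}$ on the Kronecker product of two basis vectors, and then to assemble the general statements by linearity and the mixed-product rule of the Kronecker product. First I would record the elementary fact that for column vectors the STP collapses to the ordinary Kronecker product: for $x\in\R^m$ and $y\in\R^n$ regarded as $m\times 1$ and $n\times 1$ matrices, Definition \ref{d3.1.1} gives $t=\lcm(1,n)=n$, whence $x\ltimes y=(x\otimes I_n)(y\otimes I_1)=x\otimes y$; by associativity (Proposition \ref{p3.1.3}(i)) this extends to $\xi xy\eta=\xi\otimes x\otimes y\otimes\eta$ for any tuple of column vectors. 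Consequently, (i) is precisely the claim $W_{[m,n]}(x\otimes y)=y\otimes x$, and (ii) will drop out of (i) once the flanking identity factors are accounted for.

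For (i), I would read the columns of $W_{[m,n]}$ off Definition \ref{d3.1.4}: the $i$-th block $I_n\otimes\d_m^i$ has $\d_n^k\otimes\d_m^i$ as its $k$-th column, so that the column of $W_{[m,n]}$ in global position $(i-1)n+k$ equals $\d_n^k\otimes\d_m^i$. Expanding $x=\sum_i x_i\d_m^i$ and $y=\sum_k y_k\d_n^k$ and using $\d_m^i\otimes\d_n^k=\d_{mn}^{(i-1)n+k}$, linearity then gives
\begin{align*}
W_{[m,n]}(x\otimes y)=\sum_{i,k}x_iy_k\,W_{[m,n]}\d_{mn}^{(i-1)n+k}=\sum_{i,k}x_iy_k\,\bigl(\d_n^k\otimes\d_m^i\bigr)=y\otimes x.
\end{align*}
The one place that needs care is the index bookkeeping: one must match the global ordering $(i-1)n+k$ coming from $x\otimes y$ against the block-plus-offset ordering of the columns of $W_{[m,n]}$, and verify that the two tensor factors are genuinely interchanged (not merely permuted) in the output. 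This matching of the two indexing conventions is the main, and essentially the sole, obstacle in the argument.

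For (ii), I would invoke the mixed-product rule $(A\otimes B\otimes C)(u\otimes v\otimes w)=(Au)\otimes(Bv)\otimes(Cw)$ with $A=I_p$, $B=W_{[m,n]}$, $C=I_q$, applied to the grouped vector $\xi\otimes(x\otimes y)\otimes\eta$. Rewriting the STPs as Kronecker products by the first paragraph and then substituting the result of (i) into the middle factor yields
\begin{align*}
\left(I_p\otimes W_{[m,n]}\otimes I_q\right)\xi xy\eta=\xi\otimes\bigl(W_{[m,n]}(x\otimes y)\bigr)\otimes\eta=\xi\otimes y\otimes x\otimes\eta=\xi yx\eta,
\end{align*}
which is exactly (ii), completing the proof.
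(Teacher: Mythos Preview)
Your proof is correct and complete. Note, however, that the paper does not actually supply a proof of this proposition: it is listed in Section~2.1 among the basic STP facts quoted without argument from \cite{che12}, so there is no ``paper's proof'' to compare against. Your argument---reducing STP of column vectors to the Kronecker product, reading off the columns of $W_{[m,n]}$ from Definition~\ref{d3.1.4} to verify the swap on basis vectors, extending by bilinearity, and then handling (ii) via the mixed-product rule with $I_p$ and $I_q$ as passive factors---is exactly the standard verification and would serve perfectly well as a self-contained proof were one required.
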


\begin{prp}\label{p3.1.7} Let $A\in {\cal M}_{m\times n}$ and $B\in {\cal M}_{p\times q}$. Then
\begin{align}\label{3.1.11}
W_{[m,p]}(A\otimes B)W_{[q,n]}=B\otimes A.
\end{align}
\end{prp}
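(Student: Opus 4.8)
The plan is to prove the identity by showing that both sides, viewed as $mp\times nq$ matrices, agree as linear maps on a spanning set of the domain $\R^{nq}$, namely the rank-one tensors. First I would record the elementary fact that for column vectors $u$ and $v$ the STP reduces to the Kronecker product, $uv=u\ltimes v=u\otimes v$; this follows directly from Definition \ref{d3.1.1} applied with the first factor having a single column (so that $t$ equals the length of $v$ and the identity blocks are trivial). This remark lets me pass freely between the STP notation $uv$ used in Proposition \ref{p3.1.6} and the Kronecker notation of the statement.

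Next, fix arbitrary $x\in\R^n$ and $y\in\R^q$ and apply the left-hand side to $yx=y\otimes x\in\R^{qn}$, reading the product from the right. Proposition \ref{p3.1.6}(i) gives $W_{[q,n]}(yx)=xy=x\otimes y$. Applying $A\otimes B$ and invoking the Kronecker mixed-product rule $(A\otimes B)(x\otimes y)=(Ax)\otimes(By)$ yields $(Ax)(By)$ with $Ax\in\R^m$ and $By\in\R^p$. A second application of Proposition \ref{p3.1.6}(i), this time with $W_{[m,p]}$, converts $(Ax)(By)$ into $(By)(Ax)=(By)\otimes(Ax)$. Hence the left-hand side sends $y\otimes x$ to $(By)\otimes(Ax)$.

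On the other side, one application of the mixed-product rule gives $(B\otimes A)(y\otimes x)=(By)\otimes(Ax)$, the very same vector. Since the tensors $y\otimes x$, as $y$ ranges over a basis of $\R^q$ and $x$ over a basis of $\R^n$, produce all the standard basis vectors $e_j\otimes e_i$ and hence span $\R^{nq}$, the two matrices act identically on a spanning set of their common domain and are therefore equal, which is exactly \eqref{3.1.11}.

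The computation is essentially mechanical, so the only point demanding care is bookkeeping: tracking the dimensions $m,n,p,q$ through each swap and verifying that each use of Proposition \ref{p3.1.6}(i) is applied to the correct pair of factor spaces and in the correct order. The one external ingredient is the Kronecker mixed-product identity, which I would simply cite as a standard property of $\otimes$ (consistent with Remark \ref{r3.1.2}, since STP retains the usual matrix-product properties) rather than reprove. I expect the alternative route, expanding $W_{[m,p]}$ and $W_{[q,n]}$ in block form via Definition \ref{d3.1.4} and matching block entries of $B\otimes A$ directly, to be considerably more laborious.
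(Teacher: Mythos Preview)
Your argument is correct: applying both sides to the rank-one tensors $y\otimes x$ and using Proposition~\ref{p3.1.6}(i) together with the Kronecker mixed-product rule gives $(By)\otimes(Ax)$ in each case, and these tensors span $\R^{qn}$. The dimension bookkeeping is right throughout.

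As for comparison with the paper: the paper does not actually prove Proposition~\ref{p3.1.7}. It is listed in Section~2.1 among several basic STP properties (Propositions~\ref{p3.1.3}--\ref{p3.1.7}) that are simply recalled from the reference \cite{che12} without proof. So your write-up supplies an argument the paper omits. The route you chose---reducing to the swap action on simple tensors via Proposition~\ref{p3.1.6}(i)---is the natural one and is essentially how the identity is usually established in the STP literature; the block-entry verification you mention as an alternative would indeed be heavier and is not needed here.
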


Finally, we introduce Khatri-Rao Product of two matrices \cite{che12}. Let $A\in {\cal M}_{p\times s}$ and $B\in {\cal M}_{q\times s}$. The Khatri-Rao product of $A$ and $B$, denoted by $A*B$, is defined as
\begin{align}\label{3.1.11}
\Col_i(A*B)=\Col_i(A)\ltimes \Col_i(B),\quad i=1,\cdots,s.
\end{align}

\subsection{Algebraic Expression of $k$-valued Logical Functions}

Assume $S=\{s_1,s_2,\cdots, s_n\}$ is a finite set. Then we can identify each element with a vector $\d_n^i\in \D_n$. Say, $s_i\sim \d_n^i$, $i=1,2,\cdots,n$.  This expression is called the vector form expression of finite sets.  The order of the correspondence can be assigned arbitrarily.

For instance, in classical logic, $S={\cal D}_2=\{0,1\}$, a logical variable $x\in {\cal D}$ can be expressed in vector form as
$$
x\sim \begin{bmatrix}x\\1-x\end{bmatrix}.
$$

Similarly, the vector expression of classical logical variables can also be used for multi-valued logic.

\begin{exa}\label{e3.2.1} Consider  $k$- valued logic, usually we identify
\begin{align}\label{3.2.1}
\frac{i}{k-1}\sim \d_k^{k-i},\quad i=0,1,\cdots,k-1.
\end{align}
Using this order, we have
$$
\begin{array}{l}
\d_k^i\wedge \d_k^j=\d_k^{\max{(i,j)}};\\
\d_k^i\vee \d_k^j=\d_k^{\min{(i,j)}};\\
\neg \d_k^i=\d_k^{k+1-i}.
\end{array}
$$
\end{exa}

Using vector form expression, an $n$ variable logical function $f:{\cal D}^n\ra {\cal D}$ can be expressed as a mapping from $\D^n$ to $\D$. Then we have
\begin{thm}\label{t3.2.2} Let $f:{\cal D}^n\ra {\cal D}$. Then, using vector form expression, we have
\begin{align}\label{3.2.2}
f(x_1,\cdots,x_n)=M_f\ltimes_{i=1}^nx_i,
\end{align}
where $M_f\in {\cal L}_{2\times 2^n}$ is unique, called the structure matrix of $f$.
\end{thm}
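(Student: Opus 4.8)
The plan is to exploit the fact that, for column vectors, the STP reduces to the ordinary Kronecker product, so that the vector-form encoding of a whole tuple collapses to a single standard basis vector. Concretely, for $x_i\in\D$ one has $\ltimes_{i=1}^n x_i = x_1\otimes\cdots\otimes x_n$ by Definition \ref{d3.1.1} and the associativity of Proposition \ref{p3.1.3}(i), and when each $x_i=\d_2^{i_k}$ this Kronecker product equals $\d_{2^n}^{j}$ with $j=1+\sum_{k=1}^n(i_k-1)2^{n-k}$. First I would record the elementary lemma that the map $(x_1,\dots,x_n)\mapsto \ltimes_{i=1}^n x_i$ is a bijection from $\D^n$ onto $\D_{2^n}$: it lands in $\D_{2^n}$ because a Kronecker product of standard basis vectors is again a standard basis vector, and it is a bijection because the index $j$ above runs through $\{1,\dots,2^n\}$ exactly once as $(i_1,\dots,i_n)$ ranges over $\{1,2\}^n$ (mixed-radix, i.e. binary, counting).

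For existence I would then construct $M_f$ columnwise. For each $j\in\{1,\dots,2^n\}$ let $(x_1,\dots,x_n)$ be the unique tuple with $\ltimes_{i=1}^n x_i=\d_{2^n}^{j}$, and set $\Col_j(M_f):=f(x_1,\dots,x_n)$. Since $f$ takes values in ${\cal D}$, its vector form $f(x_1,\dots,x_n)$ lies in $\D=\D_2$, so every column of $M_f$ belongs to $\D_2$ and hence $M_f\in {\cal L}_{2\times 2^n}$. The defining identity \eqref{3.2.2} then follows from the column-picking property of matrix multiplication: for an arbitrary input, writing $\ltimes_{i=1}^n x_i=\d_{2^n}^{j}$ gives $M_f\ltimes_{i=1}^n x_i=M_f\d_{2^n}^{j}=\Col_j(M_f)=f(x_1,\dots,x_n)$.

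For uniqueness, suppose $M$ and $M'$ both satisfy \eqref{3.2.2}. Feeding in the tuple associated with $\d_{2^n}^{j}$ yields $\Col_j(M)=f(x_1,\dots,x_n)=\Col_j(M')$; because the encoding is surjective onto $\D_{2^n}$, every column index $j$ is realized, so $M=M'$. The argument is essentially bookkeeping once the bijection is established, and I expect the only genuinely load-bearing step to be that reduction of the STP of several length-$2$ column vectors to their iterated Kronecker product, together with the verification that the resulting index map $(i_1,\dots,i_n)\mapsto j$ is a bijection onto $\{1,\dots,2^n\}$; everything downstream is the standard fact that multiplying by a basis vector selects the corresponding column.
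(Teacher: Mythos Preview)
Your argument is correct and is precisely the standard proof: reduce the STP of column vectors to an iterated Kronecker product, observe that $(x_1,\dots,x_n)\mapsto\ltimes_{i=1}^n x_i$ is a bijection $\D^n\to\D_{2^n}$, and then read off existence and uniqueness of $M_f$ column by column. There is nothing to compare against here, because the paper does not supply a proof of Theorem~\ref{t3.2.2}; it is quoted as a known result from the STP literature (see the reference \cite{che12} in Section~2), so your write-up in fact supplies what the paper omits.
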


Hereafter, we use ${\cal D}_k$ to express the set of $k$ elements, and $\D_k$ for their vector expressions.

Define a power reducing matrix as
\begin{align}\label{3.2.3}
PR_k:=\diag(\d_k^1,\d_k^2,\cdots,\d_k^k),\quad k\geq 2.
\end{align}
Then we have the following formula.

\begin{prp}\label{p3.2.3} Assume a vector form logical variable $x\in \D_k$.
Then
\begin{align}\label{3.2.4}
x^2=PR_k x,\quad k\geq 2.
\end{align}
\end{prp}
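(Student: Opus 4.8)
The plan is to evaluate both sides of \eqref{3.2.4} directly on an arbitrary element of $\D_k$, exploiting the fact that for column vectors the semi-tensor product collapses to an ordinary Kronecker product. First I would record that, applying Definition \ref{d3.1.1} to the pair of $k\times 1$ vectors, we have $t=\lcm(1,k)=k$, so $x\ltimes x=(x\otimes I_k)(x\otimes I_1)=(x\otimes I_k)x=x\otimes x$, where the last step follows because $x\otimes I_k$ is the block column $(x_1I_k,\dots,x_kI_k)^{\mathrm T}$ and multiplying it by $x$ stacks the scaled copies $x_jx$. Since $x\in\D_k$, we may write $x=\d_k^i$ for a unique $i\in\{1,\dots,k\}$, so it suffices to verify the identity separately for each canonical basis vector $\d_k^i$.

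For the left-hand side I would compute the Kronecker product $\d_k^i\otimes\d_k^i$ coordinatewise: its $((a-1)k+b)$-th entry is $(\d_k^i)_a(\d_k^i)_b$, which equals $1$ exactly when $a=b=i$ and vanishes otherwise. Hence $x^2=\d_k^i\otimes\d_k^i=\d_{k^2}^{(i-1)k+i}$, a single canonical basis vector of $\R^{k^2}$.

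For the right-hand side the key step is to read off the columns of the block matrix $PR_k=\diag(\d_k^1,\dots,\d_k^k)$. By construction its $j$-th column carries $\d_k^j$ in the $j$-th block of $k$ rows and is zero elsewhere, so $\Col_j(PR_k)=\d_{k^2}^{(j-1)k+j}$. Therefore $PR_k x=PR_k\d_k^i=\Col_i(PR_k)=\d_{k^2}^{(i-1)k+i}$, which is precisely the left-hand side computed above, establishing \eqref{3.2.4}. The argument is elementary; the only point requiring care is the index bookkeeping, namely confirming that the diagonal placement of the blocks $\d_k^j$ in $PR_k$ forces its $i$-th column to coincide with the ``doubled'' basis vector $\d_k^i\otimes\d_k^i$, i.e. that both sit at position $(i-1)k+i$. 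That matching of indices is the crux of the proof.
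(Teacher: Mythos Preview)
Your proof is correct. The paper itself does not supply a proof of this proposition---it is stated as a standard identity from the STP literature (cf.\ \cite{che12})---so your direct verification on the basis vectors $\d_k^i$, together with the observation that $x\ltimes x=x\otimes x$ for column vectors, is exactly the natural argument one would give.
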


\section{Matrix Expression of  Finite BTA}

\subsection{Structure Matrix of Finite BTL}

\begin{dfn}\label{d4.1.1} \cite{ros03}
\begin{enumerate}
\item ~~A set $B$ with two binary operators $\sqcap$, $\sqcup$ is called a lattice, if
\begin{itemize}
\item[(i)]~~ $(B,\sqcap)$ and $(B,\sqcup)$ are Abelian semi-groups.
\item[(ii)]~~(Absorption Laws)
\begin{align}\label{4.1.1}
\begin{array}{l}
x\sqcap(x\sqcup y)=x;\\
x\sqcup (x\sqcap y)=x.
\end{array}
\end{align}
\end{itemize}
\item~~ A lattice $B$ is distributive, if
\begin{align}\label{4.1.2}
\begin{array}{l}
x\sqcap(y\sqcup z)=(x\sqcap y)\sqcup(x\sqcap z);\\
x\sqcup(y\sqcap z)=(x\sqcup y)\sqcap(x\sqcup z).
\end{array}
\end{align}
\item~~ A lattice is bounded, if there exist largest element $\1$ and smallest element $\0$ such that
\begin{align}\label{4.1.3}
\begin{array}{l}
x\sqcap \1= x;\\
x\sqcup \0=x.
\end{array}
\end{align}
\end{enumerate}
\end{dfn}

Consider a finite set $B$, where $B=\{b_1,b_2,\cdots,b_k\}$, $k<\infty$.  Assume $(B, \sqcap, \sqcup,\1,\0)\in \left(B,T_{\ell}\right)$.  We convert the elements of $B$ into vector form  as ~$b_1 :=\1 \sim \d_k^1,~b_2\sim \d_k^2,~\cdots,~b_k := \0\sim \d_k^k$. Assume ~$M_c(k)$ and ~$M_d(k)$ are structure matrices of ~$\sqcap$ and ~$\sqcup$ respectively, then we have the following equivalent algebraic conditions for BTL.

\begin{thm}\label{t4.1.2} Assume ~$|B|=k<\infty$, ~$(B,\sqcap,\sqcup)$ is a lattice, if and only if,
\begin{itemize}
\item[(i)]~~ Associativity of $\sqcap$:
\begin{align}\label{4.1.4}
[M_c(k)]^2=M_c(k)\left(I_k\otimes M_c(k)\right).
\end{align}
\item[(ii)]~~ Associativity of $\sqcup$:
\begin{align}\label{4.1.5}
[M_d(k)]^2=M_d(k)\left(I_k\otimes M_d(k)\right).
\end{align}
\item[(iii)]~~ Commutativity of $\sqcap$:
\begin{align}\label{4.1.6}
M_c(k)=M_c(k)W_{[k,k]}.
\end{align}
\item[(iv)]~~ Commutativity of $\sqcup$:
\begin{align}\label{4.1.7}
M_d(k)=M_d(k)W_{[k,k]}.
\end{align}
\item[(v)]~~Absorption Laws:
\begin{align}\label{4.1.08}
\begin{array}{l}
M_c(k)(I_k\otimes M_d)PR_k=I_p\otimes \1_q^T;\\
M_d(k)(I_k\otimes M_c)PR_k=I_p\otimes \1_q^T
\end{array}
\end{align}
\end{itemize}
\end{thm}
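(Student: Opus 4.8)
The plan is to build a dictionary between the two operations and their structure matrices and then rewrite each defining axiom of a lattice as an equivalent matrix identity. With $b_i\sim\d_k^i$ as prescribed, the operations read $x\sqcap y=M_c(k)xy$ and $x\sqcup y=M_d(k)xy$ (STP throughout); since $M_c(k),M_d(k)$ are logical matrices by assumption, these are automatically well-defined binary operations on $B$, so closure needs no separate check. The backbone of the argument is that the products $\d_k^{i_1}\ltimes\cdots\ltimes\d_k^{i_m}$ run through the entire standard basis of $\R^{k^m}$; hence a polynomial identity in $\sqcap,\sqcup$ holds for every choice of elements of $B$ if and only if, after expanding each side into STP form, the two resulting matrices agree on all these basis vectors, i.e. if and only if they coincide as matrices. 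Every axiom thereby becomes an ``if and only if'', and reading the translations in both directions yields the two directions of the theorem at once. As a lattice is exactly a pair of Abelian semigroups linked by the two absorption laws, conditions (i)--(iv) will encode associativity and commutativity of $\sqcap,\sqcup$, while (v) encodes absorption.

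For commutativity I would invoke the swap matrix: since $W_{[k,k]}xy=yx$ (Proposition \ref{p3.1.6}), the law $x\sqcap y=y\sqcap x$ reads $M_c(k)xy=M_c(k)W_{[k,k]}xy$, which by the basis principle is equivalent to $M_c(k)=M_c(k)W_{[k,k]}$, namely (iii); (iv) is the same statement for $M_d(k)$. For associativity I would expand $(x\sqcap y)\sqcap z=x\sqcap(y\sqcap z)$: the left side collapses to $M_c(k)M_c(k)xyz=[M_c(k)]^2xyz$, while on the right the inner factor $x$ is pushed past $M_c(k)$ using $xA=(I_k\ot A)x$ (Proposition \ref{p3.1.3}(v)), giving $M_c(k)(I_k\ot M_c(k))xyz$. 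Equality on the basis of $\R^{k^3}$ then yields (i), and (ii) follows identically for $\sqcup$.

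The absorption laws are where the real work lies, because $x\sqcap(x\sqcup y)=x$ carries a \emph{repeated} variable $x$ and a \emph{free} variable $y$ that must vanish from the result. Expanding gives $x\sqcap(x\sqcup y)=M_c(k)\,x\,M_d(k)xy$; pushing the outer $x$ past $M_d(k)$ via Proposition \ref{p3.1.3}(v) yields $M_c(k)(I_k\ot M_d(k))\,xx\,y$. The repeated argument is absorbed by the power-reducing matrix: $xx=x^2=PR_kx$ (Proposition \ref{p3.2.3}), so $xxy=(PR_k\ot I_k)xy$ and the left side becomes $[M_c(k)(I_k\ot M_d(k))(PR_k\ot I_k)]xy$, whose bracketed matrix is precisely the STP product $M_c(k)(I_k\ot M_d(k))PR_k$ in (v). The right side is the $y$-independent vector $x$, and the decisive trick is to write $x=(I_k\ot\1_k^T)xy$, because $\1_k^T\d_k^j=1$ for every $j$ contracts away the $y$-slot. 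Equating these two $k\times k^2$ matrices on all basis vectors $xy$ produces the first identity of (v) (with $p=q=k$), and swapping the roles of $M_c(k)$ and $M_d(k)$ gives the second. I expect this step---collapsing the repeated argument with $PR_k$ and summing out the free argument with $\1_k^T$ while keeping the STP dimensions consistent---to be the main obstacle; once the basis-equivalence principle is installed, the other four conditions are routine.
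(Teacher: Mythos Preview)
Your proposal is correct and follows essentially the same route as the paper: translate each lattice axiom into an STP identity via the structure matrices, then invoke that $\{x_1\ltimes\cdots\ltimes x_m : x_i\in\D_k\}$ exhausts a basis of $\R^{k^m}$ to pass from equality on all arguments to equality of matrices. The paper actually only writes out case (i) (associativity of $\sqcap$) and declares the rest ``similar''; your treatment of the absorption laws with $PR_k$ for the repeated variable and $I_k\otimes\1_k^T$ to contract the free one is more explicit than anything in the paper, but it is exactly the mechanism the statement and the surrounding material intend (and your reading $p=q=k$ in (\ref{4.1.08}) is the right one).
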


\begin{proof} We prove  (\ref{4.1.4}) only. The proofs of others are similar.
Expressing $(x\sqcap y)\sqcap z=x\sqcap (y\sqcap z)$  into algebraic form, we have
\begin{align}\label{4.1.8}
M_c(k)\left((M_c(k)xy)z\right)=M_c(k)\left(x (M_c(k)yz)\right).
\end{align}
Now the left hand side of (\ref{4.1.8}) is
$$
LHS=[M_c(k)]^2xyz.
$$
The right hand side of (\ref{4.1.8}) is
$$
\begin{array}{ccl}
RHS&=&M_c(k)xM_c(k)yz\\
~&=&M_c(k)\left(I_k\otimes M_c(k)\right)xyz.\\
\end{array}
$$
Since $x,y,z\in \D_k$ are arbitrary, (\ref{4.1.4}) follows.
\end{proof}

Similarly, we have the following results:


\begin{prp}\label{p4.1.3} Assume  ~$(B,\sqcap,\sqcup)$ is a lattice with ~$|B|=k<\infty$. $B$ is distributive, if and only if,
\begin{align}\label{4.1.10}
\begin{array}{l}
M_c(k)\left(I_k\otimes M_d(k)\right)=M_d(k)M_c(k)\left(I_{k^2}\otimes M_c(k)\right)\\
~~\left(I_{k}\otimes W_{[k,k]}\right)PR_k,
\end{array}
\end{align}
and
\begin{align}\label{4.1.11}
\begin{array}{l}
M_d(k)\left(I_k\otimes M_c(k)\right)=M_c(k)M_d(k)\left(I_{k^2}\otimes M_d(k)\right)\\
~~\left(I_{k}\otimes W_{[k,k]}\right)PR_k.
\end{array}
\end{align}
\end{prp}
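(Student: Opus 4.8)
The plan is to follow the pattern of the proof of Theorem~\ref{t4.1.2}: translate each distributive law into an algebraic identity among the vector-form variables $x,y,z\in\D_k$, reduce both sides to a product of a matrix with $xyz$ using the STP rules, and then equate the matrices. Because $b_i\leftrightarrow\d_k^i$ is a bijection and the products $xyz$ run over the whole basis $\D_{k^3}$ as $x,y,z$ run over $\D_k$, each matrix identity is equivalent to the corresponding law holding on every triple; this delivers necessity and sufficiency simultaneously. I would prove \eqref{4.1.10} in full and obtain \eqref{4.1.11} by the dual argument that interchanges $\sqcap$ with $\sqcup$, i.e. $M_c(k)$ with $M_d(k)$.

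For the left side of $x\sqcap(y\sqcup z)=(x\sqcap y)\sqcup(x\sqcap z)$, I would write $y\sqcup z=M_d(k)yz$, so that $x\sqcap(y\sqcup z)=M_c(k)xM_d(k)yz$; moving $M_d(k)$ leftward past the single factor $x$ by Proposition~\ref{p3.1.3}(v) yields $M_c(k)(I_k\otimes M_d(k))xyz$, which is the left member of \eqref{4.1.10}. For the right side I would write it as $M_d(k)(M_c(k)xy)(M_c(k)xz)$ and push the second $M_c(k)$ past the block $xy$, again by Proposition~\ref{p3.1.3}(v), to reach $M_d(k)M_c(k)(I_{k^2}\otimes M_c(k))xyxz$.

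The crux---and the reason $PR_k$ and $W_{[k,k]}$ appear---is that $x$ now occurs twice, in positions $1$ and $3$ of the length-$k^4$ string $xyxz$, whereas a genuine structure matrix must act on the single string $xyz$. I would resolve this in two steps. First, Proposition~\ref{p3.1.6}(ii), where under the STP padding convention $I_k\otimes W_{[k,k]}$ acts on a length-$k^4$ vector as $I_k\otimes W_{[k,k]}\otimes I_k$ and swaps the second and third factors, gives $(I_k\otimes W_{[k,k]})\,xxyz=xyxz$. Second, Proposition~\ref{p3.2.3} collapses the repeated factor: since $x^2=PR_kx$, one has $xxyz=PR_k\,xyz$. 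Composing, $xyxz=(I_k\otimes W_{[k,k]})PR_k\,xyz$, and substituting this into the right side yields $M_d(k)M_c(k)(I_{k^2}\otimes M_c(k))(I_k\otimes W_{[k,k]})PR_k\,xyz$. Equating the coefficient of $xyz$ with the left member proves \eqref{4.1.10}, and the identical three steps applied to $x\sqcup(y\sqcap z)=(x\sqcup y)\sqcap(x\sqcup z)$ prove \eqref{4.1.11}. I expect the only genuine care to lie in this last bookkeeping, namely checking the dimensions so that $(I_k\otimes W_{[k,k]})PR_k$ really does rearrange $xyz$ into $xyxz$.
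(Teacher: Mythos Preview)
Your proposal is correct and follows exactly the approach the paper indicates: the paper does not give a separate proof for Proposition~\ref{p4.1.3} but introduces it with ``Similarly, we have the following results,'' referring back to the template of Theorem~\ref{t4.1.2}. Your computation---pushing $M_d(k)$ past $x$ via Proposition~\ref{p3.1.3}(v), then $M_c(k)$ past $xy$, then rewriting $xyxz=(I_k\otimes W_{[k,k]})PR_k\,xyz$ using Propositions~\ref{p3.1.6}(ii) and~\ref{p3.2.3}---is precisely the intended fill-in, and your bookkeeping on the swap and power-reducing steps is correct.
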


\begin{prp}\label{p4.1.4} Assume  ~$(B,\sqcap,\sqcup)$ is a lattice with ~$|B|=k<\infty$. $B$ is bounded, if and only if, there exist $\1:=\d_k^1$ and $\0:=\d_k^k$ such that
\begin{align}\label{4.1.12}
\begin{array}{l}
M_c\d_k^1=I_k;\\
M_d\d_k^k=I_k.
\end{array}
\end{align}
\end{prp}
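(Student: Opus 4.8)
The plan is to translate the two defining identities of boundedness, namely $x\sqcap\1=x$ and $x\sqcup\0=x$, into their STP form and then read off the claimed matrix equalities. Since $M_c=M_c(k)$ is the structure matrix of $\sqcap$, the identity $x\sqcap y=M_c\,x\,y$ holds for all $x,y\in\D_k$; because $(B,\sqcap)$ is Abelian, I may invoke commutativity (condition (\ref{4.1.6})) to rewrite $x\sqcap\1$ as $\1\sqcap x$, which moves the fixed element to the left. With $\1\sim\d_k^1$, associativity of STP (Proposition \ref{p3.1.3}(i)) then gives $\1\sqcap x=M_c\ltimes\d_k^1\ltimes x=(M_c\ltimes\d_k^1)\,x$, since $\d_k^1\ltimes x$ is exactly the vector form of the ordered pair $(\1,x)$. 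The analogous manipulation with $M_d$ and $\0\sim\d_k^k$ turns $x\sqcup\0$ into $(M_d\ltimes\d_k^k)\,x$.

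The key step is to check that $M_c\ltimes\d_k^1$ is a genuine $k\times k$ matrix and that requiring it to act as the identity on every basis vector forces it to equal $I_k$. A dimension count via Definition \ref{d3.1.1} — with $M_c\in{\cal M}_{k\times k^2}$, $\d_k^1\in\R^k$, and $t=\lcm(k^2,k)=k^2$ — shows $M_c\ltimes\d_k^1\in{\cal M}_{k\times k}$. For any $k\times k$ matrix $A$ one has $A\d_k^i=\Col_i(A)$, so the equations $(M_c\d_k^1)\d_k^i=\d_k^i$ for $i=1,\dots,k$ are equivalent to $\Col_i(M_c\d_k^1)=\d_k^i$ for all $i$, that is, to $M_c\d_k^1=I_k$. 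The same reasoning applied to $\sqcup$ gives $M_d\d_k^k=I_k$.

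Assembling the two directions is then immediate. For necessity, if $B$ is bounded I use the labeling convention fixed before Theorem \ref{t4.1.2}, which places the largest element at position $1$ and the smallest at position $k$, so that $\1\sim\d_k^1$ and $\0\sim\d_k^k$; the identities $x\sqcap\1=x$ and $x\sqcup\0=x$ then hold for every $x\in\D_k$, and by the reduction above this is exactly $M_c\d_k^1=I_k$ and $M_d\d_k^k=I_k$. For sufficiency, the two matrix equalities yield $\1\sqcap x=x$ and $\0\sqcup x=x$ for all $x\in\D_k$, whence by commutativity $\d_k^1$ is a $\sqcap$-unit and $\d_k^k$ a $\sqcup$-unit, so $B$ is bounded.

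I expect the only real subtlety — rather than a genuine obstacle — to be bookkeeping: making sure the fixed argument is moved to the correct side before applying associativity, and noting that it suffices to verify the identities on the finite basis $\D_k$ rather than on all of $\R^k$, since a $k\times k$ matrix is pinned down by its action on the standard basis vectors $\d_k^i$. Commutativity of $\sqcap$ and $\sqcup$, already built into the lattice axioms of Definition \ref{d4.1.1}, is precisely what legitimizes this argument-order reduction.
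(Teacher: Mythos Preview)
Your proposal is correct and follows essentially the same approach the paper intends: the paper does not spell out a proof of Proposition~\ref{p4.1.4} but simply prefaces it with ``Similarly, we have the following results,'' referring back to the proof pattern of Theorem~\ref{t4.1.2} (express the algebraic identity in STP form, then strip off the arbitrary arguments). Your use of commutativity to move $\1$ and $\0$ to the left slot is exactly the small bookkeeping step needed to land on $M_c\d_k^1$ and $M_d\d_k^k$ rather than an awkward right-multiplication expression, and your dimension check confirming $M_c\ltimes\d_k^1\in{\cal M}_{k\times k}$ is a helpful sanity detail the paper omits.
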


\subsection{Structure Matrix of Finite CA}

First, we list some complements, most of them are well known \cite{fan13}.

\begin{dfn}\label{d4.2.1} Let $B$ be a lattice with $|B|=k$. A complement operator may be defined as follows:
\begin{itemize}
\item[(i)]~~ Free Complement:
\begin{align}\label{4.2.1}
x':=\varphi(x),\quad x\in B,
\end{align}
where $\varphi:B\ra B$ is a preassigned unary mapping.
\item[(ii)]~~ Double Idempotent Complement (DIC): There are largest element ~$\1$ and smallest element ~~$\0$, such that
\begin{align}\label{4.2.2}
\begin{array}{l}
{\1}'=\0,\quad {\0}'=\1,\\
{x}''=x.
\end{array}
\end{align}

\item[(iii)]~~De Morgan's Complement: It is a complement satisfying De Morgan's Laws:
\begin{align}\label{4.2.3}
\begin{array}{l}
(x\sqcup y)'=x'\sqcap y',\\
(x\sqcap y)'=x'\sqcup y',\quad x,y\in B.\\
\end{array}
\end{align}

\item[(iv)]~~Kleene's Complement: It is a De Morgan's complement, and satisfying
\begin{align}\label{4.2.4}
x\sqcap x'\leq y\sqcup y',\quad x,y\in B.
\end{align}

\item[(v)]~~Pseudo Complement:
\begin{align}\label{4.2.5}
x'=\sqcup\{y\;|\;x\sqcap y=\0\},
\end{align}

\item[(vi)]~~Stone Complement: It is a pseudo complement, and satisfying
\begin{align}\label{4.2.6}
x'\sqcup x''=\1.
\end{align}

\item[(vii)]~~ Boolean Complement:
\begin{align}\label{4.2.7}
x\sqcup x'=\1,\quad x\sqcap x'=\0.
\end{align}

\end{itemize}
\end{dfn}

The following result is straightforward verifiable.

\begin{prp}\label{p4.2.2} Let $B$ be a lattice with $|B|=k$.
\begin{itemize}
\item[(i)]~~ ${~}'$ is a free complement, if and only if, there is a logical matrix $M_n\in {\cal L}_{k\times k}$, called the structure matrix of the complement, such that
\begin{align}\label{4.2.8}
x':=M_nx,\quad x\in B.
\end{align}

\item[(ii)]~~ ${~}'$ is a DIC, if and only if, $\Col_1(M_n)=\d_k^k$,  $\Col_k(M_n)=\d_k^1$, and the following equality holds:
\begin{align}\label{4.2.9}
\begin{array}{l}
[M_n]^2=I_k.
\end{array}
\end{align}

\item[(iii)]~~${~}'$ is a De Morgan's Complement, if and only if,
\begin{align}\label{4.2.10}
\begin{array}{l}
M_nM_d=M_c(M_n\otimes M_n),\\
M_nM_c=M_d(M_n\otimes M_n).\\
\end{array}
\end{align}

\item[(iv)]~~${~}'$ is a  kleene's complement, if and only if, its structure matrix $M_n$ satisfied (\ref{4.2.10}) and the following (\ref{4.2.11}).
\begin{align}\label{4.2.11}
\begin{array}{l}
M_c\left(M_c(I_k\otimes M_n)PR_k\right)\left[I_k\otimes \left(M_d(I_k\otimes M_n)PR_k\right)\right]\\
=M_c(I_k\otimes M_n)PR_k(I_k\otimes \1_k^T).
\end{array}
\end{align}

\item[(v)]~~${~}'$ is a  pseudo complement, if and only if,
\begin{align}\label{4.2.12}
\left(\d_k^i\right)'=\sqcup_{j\in J} \d_k^j,
\end{align}
where
$$
J=\left\{j\;|\;Col_j(M_c^i)=\d_k^k\right\},
$$
and $M_c^i$ is the $i$-th $k\times k$ block of $M_c$.

\item[(vi)]~~ ${~}'$ is a  Stone complement, if and only if, $M_n$ is the structure matrix of pseudo complement, satisfying
\begin{align}\label{4.2.13}
M_d\left(M_n\otimes [M_n]^2\right)PR_k=\1_k^T\otimes \d_k^1.
\end{align}

\item[(vii)]~~ Boolean Complement:
\begin{align}\label{4.2.14}
\begin{array}{l}
M_d\left(I_k\otimes M_n\right)PR_k=\1_k^T\otimes \d_k^1,\\
M_c\left(I_k\otimes M_n\right)PR_k=\1_k^T\otimes \d_k^k.\\
\end{array}
\end{align}

\end{itemize}
\end{prp}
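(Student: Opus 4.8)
The plan is to read each of the seven items as the defining identity (or inequality) of the corresponding complement, rewrite it in vector form using $x'=M_n x$, and then strip off the arbitrary arguments $x,y\in\D_k$ to obtain an identity between structure matrices. Item (i) is the foundation: applying Theorem~\ref{t3.2.2} to the unary map $\varphi$ gives a unique logical matrix $M_n\in\mathcal{L}_{k\times k}$ with $x'=M_n x$, and conversely any such $M_n$ defines a unary operation on $B$. Thereafter every item is obtained by substituting this expression into the relevant law and simplifying with the STP rules of Proposition~\ref{p3.1.3} (in particular moving structure matrices past vectors) together with the power-reducing identity $x^2=PR_k x$ of Proposition~\ref{p3.2.3} whenever a variable is repeated.

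I would dispatch the purely equational items first. For (ii), the boundary conditions $\1'=\0$ and $\0'=\1$ read, in vector form, $M_n\d_k^1=\d_k^k$ and $M_n\d_k^k=\d_k^1$, i.e. $\Col_1(M_n)=\d_k^k$ and $\Col_k(M_n)=\d_k^1$, while involution $x''=x$ becomes $M_n^2 x=x$ for all $x$, hence $[M_n]^2=I_k$. For (iii) I substitute $x'=M_n x$, $y'=M_n y$ into the two De Morgan laws; the right-hand sides become $M_c(M_n x)(M_n y)=M_c(M_n\otimes M_n)xy$ and $M_d(M_n\otimes M_n)xy$, the left-hand sides are $M_n M_d xy$ and $M_n M_c xy$, and arbitrariness of $xy$ yields (\ref{4.2.10}). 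Items (vi) and (vii) are similar: I compute $x\sqcup x'=M_d(I_k\otimes M_n)PR_k x$, $x\sqcap x'=M_c(I_k\otimes M_n)PR_k x$, and $x'\sqcup x''=M_d(M_n\otimes[M_n]^2)PR_k x$, then observe that requiring such a column vector to equal the constant $\d_k^1$ (resp. $\d_k^k$) for every $x\in\D_k$ is exactly the statement that the coefficient matrix has all columns equal to that constant, i.e. equals $\1_k^{\mathrm{T}}\otimes\d_k^1$ (resp. $\1_k^{\mathrm{T}}\otimes\d_k^k$). Item (v) is combinatorial rather than algebraic: writing $M_c=[M_c^1,\dots,M_c^k]$ in $k\times k$ blocks, one has $\d_k^i\sqcap\d_k^j=\Col_j(M_c^i)$, so the set over which the join in (\ref{4.2.5}) is taken is precisely $J=\{\,j:\Col_j(M_c^i)=\d_k^k\,\}$, giving (\ref{4.2.12}).

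The main obstacle is item (iv), the Kleene condition, because $x\sqcap x'\leq y\sqcup y'$ is an inequality rather than an identity and $\leq$ is not yet available in matrix form. The plan is to use the lattice characterization $a\leq b\iff a\sqcap b=a$, turning the requirement into the identity $(x\sqcap x')\sqcap(y\sqcup y')=x\sqcap x'$ for all $x,y\in\D_k$. Substituting $a:=x\sqcap x'=M_c(I_k\otimes M_n)PR_k x$ and $b:=y\sqcup y'=M_d(I_k\otimes M_n)PR_k y$ into the outer meet $M_c a b$ and aligning the second factor with $I_k\otimes(\,\cdot\,)$ produces the left-hand side of (\ref{4.2.11}). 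The delicate point is the right-hand side: $x\sqcap x'$ does not depend on $y$, so to equate two functions of the pair $xy$ I must reintroduce $y$ as a dummy, using $\1_k^{\mathrm{T}}y=1$ to write $a=M_c(I_k\otimes M_n)PR_k(I_k\otimes\1_k^{\mathrm{T}})xy$; only then does matching coefficients over arbitrary $xy$ give (\ref{4.2.11}) exactly. The remaining work, common to all items, is the bookkeeping of STP reshuffles and the collapsing of repeated arguments through $PR_k$, which is routine but must be carried out carefully so that the placement of the $I_k\otimes$ factors and the power-reducing matrix comes out as written.
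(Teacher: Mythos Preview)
Your proposal is correct and follows exactly the approach the paper intends: the paper itself offers no proof beyond the sentence ``The following result is straightforward verifiable,'' and your outline supplies precisely that verification, translating each defining law into vector form via $x'=M_nx$, using Proposition~\ref{p3.1.3}(v) to push matrices past vectors and Proposition~\ref{p3.2.3} to collapse repeated arguments, and then stripping the arbitrary $x,y\in\D_k$. Your handling of item~(iv)---replacing the order relation by the lattice identity $a\leq b\iff a\sqcap b=a$ and reintroducing the dummy $y$ through $I_k\otimes\1_k^{\mathrm{T}}$---is the only nontrivial step and is carried out correctly.
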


\subsection{Constructing Finite BTA}

\begin{dfn}\label{d4.3.1} $A=(B,\sqcap,\sqcup,{~}',\1,\0)$ is called a Boolean type algebra, if  $L=(B,\sqcap,\sqcup,\1,\0)$ is a lattice and $C=(B,{~}')$, (or $C=(B,{~}',\1,\0)$ is $\1,~\0$ are required), is a complement on $L$.
\end{dfn}

\begin{rem}\label{r4.3.2} There are some well known Boolean type algebras. For instance, let $L$ be a bounded distributive lattice. Then \cite{luo05}
\begin{itemize}
\item[(i)]~~ if $C$ is a De Morgan's complement,  $A$ is  De Morgan algebra;
\item[(ii)]~~ if $C$ is a kleene's complement,  $A$ is  kleene algebra;
\item[(iii)]~~ if $C$ is a Stone's complement,  $A$ is  Stone algebra;
\item[(iv)]~~ if $C$ is a Boolean complement, $A$ is  Boolean algebra.
\end{itemize}
\end{rem}
Because of the above remark and for statement ease, we give the following assumption:

\vskip 2mm

\noindent{\bf A1}: $L$ is a bounded distributive lattice (i.e., BL of a BA).

\vskip 2mm

Hereafter,  we consider only bounded distributed $L$. The argument for other lattices is similar.

Using the above matrix expressions of the BTL and CA, it is easy to construct finite BTAs.

\begin{exa}\label{e4.3.3} Let $k=4$. Using Theorem \ref{t4.1.2} and Proposition \ref{p4.1.3}, it is easy to figure out that there are only three bounded distributive lattices, which are
\begin{itemize}
\item[(i)]~~
\begin{align}\label{4.3.1}
\begin{array}{l}
M^1_c(4)=\d_4[1,2,3,4,2,2,2,4,3,2,3,4,4,4,4,4];\\
M^1_d(4)=\d_4[1,1,1,1,1,2,3,2,1,3,3,3,4,4,4,4];\\
\end{array}
\end{align}
\item[(ii)]~~
\begin{align}\label{4.3.2}
\begin{array}{l}
M^2_c(4)=\d_4[1,2,3,4,2,2,3,4,3,3,3,4,4,4,4,4];\\
M^2_d(4)=\d_4[1,1,1,1,1,2,2,2,1,2,3,3,4,4,4,4];\\
\end{array}
\end{align}
\item[(iii)]~~
\begin{align}\label{4.3.3}
\begin{array}{l}
M^3_c(4)=\d_4[1,2,3,4,2,2,4,4,3,4,3,4,4,4,4,4];\\
M^3_d(4)=\d_4[1,1,1,1,1,2,1,2,1,1,3,3,4,4,4,4];\\
\end{array}
\end{align}
\end{itemize}

Next, we consider complements.

\begin{itemize}
\item[(i)]~~ Since there are $\left|{\cal L}_{4\times 4}\right|=4^4$, there are $4^4$ free complements.

\item[(ii)]~~ There are two  DIC, which are
\begin{align}\label{4.3.4}
M_n^1(4)=\d_4[4,2,3,1],
\end{align}
and
\begin{align}\label{4.3.5}
M_n^2(4)=\d_4[4,3,2,1].
\end{align}

\item[(iii)]~~
\begin{itemize}
\item~~ For the lattice (\ref{4.3.1}) or (\ref{4.3.2}) there are $35$ De Morgan's complements, which are all kleene's complement. So they form
$35$ Kleene algebras. Particularly, (\ref{4.3.5}) is a Kleene complement, while (\ref{4.3.4}) is not even a De Morgan's complement.
\item~~ For the lattice (\ref{4.3.3}) there are $16$ De Morgan's complements. Among them, there are $9$ kleene's complements. Particularly, (\ref{4.3.5}) is a Kleene's complement, while (\ref{4.3.4}) is a De Morgan's complement, but not kleene's complements.
\end{itemize}

\item[(vi)]~~
\begin{itemize}
\item~~ For the lattice (\ref{4.3.1}) or (\ref{4.3.2}),
it is easy to verify that the only  pseudo complement is
\begin{align}\label{4.3.6}
M_n(4)=\d_4[4,4,4,1],
\end{align}
which is obviously Stone's complement.
\item~~ For the lattice (\ref{4.3.3}), the only pseudo complement is
\begin{align}\label{4.3.7}
M_n(4)=\d_4[4,3,2,1],
\end{align}
which is also Stone's complement.
\end{itemize}
\end{itemize}
\end{exa}

\begin{exa}\label{e4.3.4} Let $k=5$.
\begin{itemize}
\item~~
It is easy to figure out that there are $12$ bounded distributive lattices, which are
\begin{align}\label{4.3.8}
\begin{array}{l}
M^i_c(5)=\d_5[1,2,3,4,5,2,2,a_i,b_i,5,3,a_i,3,c_i, 5,\\
~~4, b_i, c_i,4,5,5,5,5,5,5];\\
M^i_d(5)=\d_5[1,1,1,1,1,1,2,d_i,e_i,2,1,d_i,3,f_i, 3,\\
~~1, e_i, f_i,4,4,1,2,3,4,5],\quad i=1,2,\cdots,12,\\
\end{array}
\end{align}
where $v_i:=(a_i,b_i,c_i,d_i,e_i,f_i)$, $i=1,2,\cdots,12$ are
$$
\begin{array}{ll}
v_1=(2,2,2,3,4,1),&v_2=(2,2,3,3,4,4),\\
v_3=(2,2,4,3,4,3),&v_4=(2,4,4,3,2,3),\\
v_5=(2,5,4,3,3,3),&v_6=(3,2,3,2,4,4),\\
v_7=(3,3,3,2,1,4),&v_8=(3,4,3,2,2,4),\\
v_9=(3,4,4,2,2,3),&v_{10}=(3,4,5,2,2,2),\\
v_{11}=(4,4,4,1,2,3),&v_{12}=(5,2,3,4,4,4).\\
\end{array}
$$
\item~~ We consider only the DIC. There are $4$ DICs, which are
$$
\begin{array}{l}
M_n^1(5)=\d_5[5,2,3,4,1],\\
M_n^2(5)=\d_5[5,3,2,4,1],\\
M_n^3(5)=\d_5[5,4,3,2,1],\\
M_n^4(5)=\d_5[5,2,4,3,1].
\end{array}
$$
\item~~ Using above bounded distributive lattices and DICs, we can construct $6$ De Morgan Algebras, which are
$$
\begin{array}{l}
DMA^1(5)=\left(M^2_c(5),M_d^2(5)\right)\bigcup M_n^3(5),\\
DMA^2(5)=\left(M^3_c(5),M_d^3(5)\right)\bigcup M_n^2(5),\\
DMA^3(5)=\left(M^4_c(5),M_d^4(5)\right)\bigcup M_n^4(5),\\
DMA^4(5)=\left(M^6_c(5),M_d^6(5)\right)\bigcup M_n^4(5),\\
DMA^5(5)=\left(M^8_c(5),M_d^8(5)\right)\bigcup M_n^2(5),\\
DMA^6(5)=\left(M^9_c(5),M_d^9(5)\right)\bigcup M_n^3(5).\\
\end{array}
$$
\end{itemize}
\end{exa}

\section{Homomorphism and Isomorphism}

\subsection{Homomorphism}

\begin{dfn} \label{d2.1.1}
\begin{enumerate}
\item~~Let $L_i=(B_i,\sqcap_i,\sqcup_i,\1_i,\0_i)$, $i=1,2$ be two bounded lattices, and $\pi:B_1\ra B_2$. Then $\pi$ is called a lattice homomorphism if
\begin{align}\label{2.1.1}
\begin{array}{l}
\pi(x\sqcap_1 y)=\pi(x)\sqcap_2 \pi(y),\\
\pi(x\sqcup_1 y)=\pi(x)\sqcup_2 \pi(y),\quad x,y\in B_1,\\
\pi(\1_1)=\1_2,\\
\pi(\0_1)=\0_2.
\end{array}
\end{align}
\item~~ Let $A_i=(B_i,\sqcap_i,\sqcup_i,{~}'_i,\1_i,\0_i)$, $i=1,2$ be two BTAs, and $\pi:B_1\ra B_2$. Then $\pi$ is called a BTA homomorphism if
it is a lattice homomorphism of $L_i=(B_i,\sqcap_i,\sqcup_i,\1_i,\0_i)$, $i=1,2$, and
\begin{align}\label{2.1.2}
\pi(x'_1)=(\pi(x))'_2.
\end{align}
\end{enumerate}
\end{dfn}

Assume $|B_1|=p$, $|B_2|=q$. Express $B_1=\{\d_p^1,\d_p^2,\cdots,\d_p^p\}$,  $B_2=\{\d_q^1,\d_q^2,\cdots,\d_q^q\}$. Then $\pi:B_1\ra B_2$, can be expressed in a matrix form as
$$
\pi(x)=M_{\pi}x,
$$
where $M_{\pi}\in {\cal L}_{q\times p}$ is the structure matrix of $\pi$.

\begin{prp} \label{p2.1.2}
\begin{enumerate}
\item~~Let $L_i=(B_i,\sqcap_i,\sqcup_i,\1_i,\0_i)$, $i=1,2$ be two bounded lattices, $|B_1|=p<\infty$, $|B_2|=q<\infty$ and $\pi:B_1\ra B_2$. Then $\pi$  a lattice homomorphism, if and only if,
\begin{align}\label{2.1.4}
\begin{array}{l}
M_{\pi}M_c^1=M_c^2M_{\pi}\left(I_p\otimes M_{\pi}\right),\\
M_{\pi}M_d^1=M_d^2M_{\pi}\left(I_p\otimes M_{\pi}\right),\\
\Col_1(M_{\pi})=\d_q^1,\\
\Col_p(M_{\pi})=\d_q^q.
\end{array}
\end{align}
\item~~ Let $A_i=(B_i,\sqcap_i,\sqcup_i,{~}'_i,\1_i,\0_i)$, $i=1,2$ be two BTAs, and $\pi:B_1\ra B_2$. A lattice homomorphism $\pi$ is a BTA homomorphism, if and only if,
\begin{align}\label{2.1.5}
M_{\pi}M_n^1=M_n^2M_{\pi}.
\end{align}
\end{enumerate}
\end{prp}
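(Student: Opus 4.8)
The plan is to translate each defining property of a (BTA) homomorphism, as stated in Definition \ref{d2.1.1}, into its matrix form using the vector representation of finite sets, and then use the fact that the defining equations must hold for all choices of the argument vectors to strip off the free vectors and obtain pure matrix identities. This is exactly the technique employed in the proof of Theorem \ref{t4.1.2}: express the pointwise condition algebraically, reduce both sides to the form $(\text{matrix})\ltimes x\ltimes y$ (or $\ltimes x$ for the unary case), and then invoke arbitrariness of $x,y\in\D_p$ to conclude equality of the coefficient matrices.

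First I would handle part (1), the lattice homomorphism. The conditions $\pi(\1_1)=\1_2$ and $\pi(\0_1)=\0_2$ are immediate: since $\1_1\sim\d_p^1$, $\1_2\sim\d_q^1$, $\0_1\sim\d_p^p$, $\0_2\sim\d_q^q$ under the chosen vector ordering, applying $\pi(x)=M_{\pi}x$ gives $M_{\pi}\d_p^1=\d_q^1$ and $M_{\pi}\d_p^p=\d_q^q$, which are precisely $\Col_1(M_{\pi})=\d_q^1$ and $\Col_p(M_{\pi})=\d_q^q$. For the meet-preservation condition $\pi(x\sqcap_1 y)=\pi(x)\sqcap_2\pi(y)$, I would write the left side as $M_{\pi}M_c^1 xy$ and the right side as $M_c^2(M_{\pi}x)(M_{\pi}y)$. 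To consolidate the right side into a single coefficient matrix acting on $xy$, I need to commute $M_{\pi}$ past the vector $x$; here Proposition \ref{p3.1.6}(v), namely $x\ltimes A=(I_t\otimes A)\ltimes x$, is the key tool, yielding $M_c^2 M_{\pi}(I_p\otimes M_{\pi})xy$. Arbitrariness of $x,y$ then gives $M_{\pi}M_c^1=M_c^2 M_{\pi}(I_p\otimes M_{\pi})$; the join case is identical with $M_d$ in place of $M_c$. Conversely, if the four matrix equations hold, reading them as identities applied to arbitrary basis vectors recovers the pointwise homomorphism conditions.

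For part (2), I would assume $\pi$ is already a lattice homomorphism and only need to characterize the extra complement-compatibility condition (\ref{2.1.2}), $\pi(x_1')=(\pi(x))_2'$. Writing the complements via their structure matrices, $x_1'=M_n^1 x$ and $(\pi(x))_2'=M_n^2 M_{\pi}x$, the left side becomes $M_{\pi}M_n^1 x$ and the right side $M_n^2 M_{\pi}x$. This is the unary analogue, so no swap or Proposition \ref{p3.1.6}(v) manipulation is needed; arbitrariness of $x\in\D_p$ immediately yields $M_{\pi}M_n^1=M_n^2 M_{\pi}$, which is (\ref{2.1.5}), and the converse is read off directly.

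The main obstacle, and really the only subtle point, is the bookkeeping in the meet/join step: correctly pulling the factor $M_{\pi}$ through the intervening vector $x$ so that the right-hand coefficient emerges in the clean form $M_c^2 M_{\pi}(I_p\otimes M_{\pi})$ rather than some scrambled arrangement. Here one must be careful that $M_{\pi}x$ is a $q$-dimensional vector and apply Proposition \ref{p3.1.6}(v) with $t=q$, so that $(M_{\pi}x)\ltimes M_{\pi}=(I_q\otimes M_{\pi})(M_{\pi}x)y$; combined with associativity (Proposition \ref{p3.1.3}(i)) and the elementary identity $(I_q\otimes M_{\pi})M_{\pi}=M_{\pi}(I_p\otimes M_{\pi})$ following from the mixed-product property of $\otimes$, this produces the stated form. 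Everything else is a routine application of the arbitrariness-of-arguments principle already illustrated in Theorem \ref{t4.1.2}.
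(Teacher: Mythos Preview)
Your proposal is correct and follows essentially the same approach as the paper's proof: translate each pointwise homomorphism condition into its matrix form and strip off the free arguments. Two small remarks: the commutation rule $x\ltimes A=(I_t\otimes A)\ltimes x$ is Proposition~\ref{p3.1.3}(v), not Proposition~\ref{p3.1.6}; and the paper gets the desired coefficient more directly by applying that rule to $x\in\D_p$ inside $M_c^2 M_{\pi} x\, M_{\pi} y$, yielding $M_c^2 M_{\pi}(I_p\otimes M_{\pi})xy$ in one step, without the detour through $(I_q\otimes M_{\pi})M_{\pi}=M_{\pi}(I_p\otimes M_{\pi})$.
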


\begin{proof} The proof is straightforward. For instance, we can prove each equation in (\ref{2.1.4}) is equivalent to each equation in (\ref{2.1.1}). Say, consider the first one.
$$
\pi(x\sqcap_1y)=\pi(x)\sqcap_2 \pi(y)
$$
$\Leftrightarrow$
$$
M_{\pi}M_c^1xy=M_c^2 M_{\pi}x M_{\pi}y
$$
$\Leftrightarrow$
$$
M_{\pi}M_c^1xy=M_c^2 M_{\pi}\left(I_p\otimes M_{\pi}\right)xy
$$
$\Leftrightarrow$
$$
M_{\pi}M_c^1=M_c^2 M_{\pi}\left(I_p\otimes M_{\pi}\right).
$$
\end{proof}

\subsection{Isomorphism}

\begin{dfn} \label{d2.2.1}
\begin{enumerate}
\item~~Let $L_i=(B_i,\sqcap_i,\sqcup_i,\1_i,\0_i)$, $i=1,2$ be two bounded lattices, and $\pi:B_1\ra B_2$ be a lattice homomorphism. If a lattice homomorphism $\pi$ is a one-to-one and onto mapping, then $\pi$ is called a lattice isomorphism.
\item Let $\pi:A_1\ra A_2$ be a BTA  homomorphism. If $\pi$ is a one-to-one and onto mapping, then $\pi$ is called a BTA isomorphism.
\end{enumerate}
\end{dfn}

\begin{rem}\label{r2.2.2} It is easy to verify that if $\pi$ is an isomorphism then so is $\pi^{-1}$.
\end{rem}

\begin{dfn}\label{d2.2.3} Given a permutation $\sigma\in {\bf S}_n$, then its structure matrix $M_{\sigma}$, defined by
\begin{align}\label{2.2.1}
\Col_i(M_{\sigma})=\d_n^{\sigma(i)},\quad i=1,\cdots,n,
\end{align}
is called a permutation matrix.
\end{dfn}

The following result is obvious.

\begin{prp}\label{p2.2.4} Let $A_i$, $i=1,2$,  be two finite BTAs with (i) $|A_1|=|A_2|=n$;  (ii) the corresponding structure matrices are $M_c^i$, $M_d^i$ and $M_n^i$, $i=1,2$, respectively. $T:A_1\ra A_2$ is a BTA isomorphism. Then
\begin{itemize}
\item[(i)]~~ $T$ is a permutation matrix. That is, there is a $\sigma\in {\bf S}_n$ such that
$T=M_{\sigma}$. And hence $T^T=T^{-1}$.
\item[(ii)]~~
\begin{align}\label{2.2.2}
\begin{array}{l}
M_c^1=T^TM_c^2(T\otimes T),\\
M_d^1=T^TM_d^2(T\otimes T),\\
M_n^1=T^TM_n^2T.\\
\end{array}
\end{align}
\end{itemize}
\end{prp}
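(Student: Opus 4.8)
The plan is to prove Proposition \ref{p2.2.4} in two stages: first establish that an isomorphism $T$ must be a permutation matrix (part (i)), and then translate the homomorphism identities of Proposition \ref{p2.1.2} under this invertibility to obtain the conjugation formulas in (ii). The key observation driving everything is that an isomorphism is a bijection between two finite sets of equal cardinality $n$, both represented in vector form as $\D_n$. Since the structure matrix $M_\sigma$ of a map sends basis vectors $\d_n^i$ to basis vectors, and bijectivity means distinct inputs go to distinct outputs covering all of $\D_n$, the matrix $T$ has exactly one $1$ in each column and each row. That is precisely the definition of a permutation matrix (Definition \ref{d2.2.3}), so there exists $\sigma\in{\bf S}_n$ with $T=M_\sigma$. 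The relation $T^T=T^{-1}$ then follows because a permutation matrix is orthogonal.

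\emph{First} I would set up part (i) carefully. As a BTA homomorphism, $T=M_\pi$ satisfies the homomorphism identities (\ref{2.1.4})--(\ref{2.1.5}) with $p=q=n$, so $T\in{\cal L}_{n\times n}$ is already a logical matrix (one $1$ per column). The isomorphism hypothesis adds that $T$ is one-to-one and onto. Onto forces the $n$ columns of $T$ to exhaust all of $\D_n$, and one-to-one forces them to be distinct; together these make $\Col(T)=\D_n$ with no repetition, which is exactly the condition that $T$ is a permutation matrix. Writing $T=M_\sigma$ for the induced $\sigma\in{\bf S}_n$ and invoking $T T^T = I_n$ gives $T^T=T^{-1}$.

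\emph{Next}, for part (ii), I would use Remark \ref{r2.2.2}, which guarantees $\pi^{-1}$ is again an isomorphism, hence its structure matrix is $T^{-1}=T^T$. The cleanest route is to apply Proposition \ref{p2.1.2} with the roles of $A_1$ and $A_2$ reversed, i.e. to the isomorphism $\pi^{-1}:A_2\to A_1$ with structure matrix $T^T$. The first identity of (\ref{2.1.4}) then reads $T^T M_c^2 = M_c^1 T^T(I_n\otimes T^T)$, but it is more direct to start from the forward identity $T M_c^1 = M_c^2 T(I_n\otimes T)$, left-multiply by $T^T$, and use $T^T T=I_n$ to isolate $M_c^1=T^T M_c^2 T(I_n\otimes T)$. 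To reach the stated symmetric form $M_c^1=T^T M_c^2(T\otimes T)$ I would invoke the mixed-product property $T(I_n\otimes T)=(TI_n)\otimes(1\cdot T)$; more precisely, since $T$ is a scalar-free logical matrix of compatible size, $T\ltimes(I_n\otimes T)=(T\otimes T)$ follows from the Kronecker factorization underlying Definition \ref{d3.1.1} together with $T I_n = T$. The $M_d$ identity is identical with $M_c$ replaced by $M_d$, and the complement identity $M_n^1=T^T M_n^2 T$ comes the same way from (\ref{2.1.5}) by left-multiplying $T M_n^1 = M_n^2 T$ by $T^T$.

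\emph{The main obstacle} I anticipate is the algebraic bookkeeping that converts $T(I_n\otimes T)$ into $T\otimes T$: one must verify that for these particular logical/permutation matrices the semi-tensor product collapses to the ordinary Kronecker product, which rests on the fact that in $M_c^2 T(I_n\otimes T)xy$ the factor $T$ acts on the first argument and $I_n\otimes T$ acts on the second, so that the composite is exactly $(Tx)\otimes(Ty)=(T\otimes T)(x\otimes y)$. Once this identity is pinned down, the remaining steps are routine left-multiplications by $T^T$ and applications of $T^T T=I_n$, so the substance of the proof is entirely concentrated in recognizing the permutation structure in part (i) and handling this one Kronecker-versus-STP bookkeeping step in part (ii).
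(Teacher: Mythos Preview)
Your proposal is correct. The paper itself gives no proof of this proposition, simply declaring the result ``obvious''; your argument---using that a bijective logical matrix must be a permutation matrix, then left-multiplying the homomorphism identities of Proposition~\ref{p2.1.2} by $T^{T}$ and collapsing $T\ltimes(I_n\otimes T)=(T\otimes I_n)(I_n\otimes T)=T\otimes T$ via the mixed-product rule---is exactly the natural derivation the paper leaves implicit.
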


\begin{exa}\label{e2.2.5} Recall Example \ref{e4.3.3}. When $k=4$ the only non-trivial isomorphism, which keeps $\1$ and $\0$ unchanged, is  $T=\d_4[1,3,2,4]$.
Then it is easy to see that lattices $L_1:=\left\{M_c^1(4),M_d^1(4)\right\}$ is isomorphic to $L_2:=\left\{M_c^2(4),M_d^2(4)\right\}$. As for the complements, $M_n^1(4)$ is isomorphic to $M_n^2(4)$. Hence, we have isomorphic BTAs as:
$A_1=\left\{ M_c^1(4),M_d^1(4),M_n^i(4)\right\}$ is isomorphic to $A_2=\left\{ M_c^2(4),M_d^2(4),M_n^j(4)\right\}$, $i,j=1,2$.
\end{exa}

\begin{exa}\label{e2.2.6} Recall Example \ref{e4.3.4}. When $k=5$ there are $5$ non-trivial isomorphisms, which keeps $\1$ and $\0$ unchanged, they are
$$
\begin{array}{l}
T_1=\d_5[1,2,4,3,5],\\
T_2=\d_5[1,3,2,4,5],\\
T_3=\d_5[1,3,4,2,5],\\
T_4=\d_5[1,4,2,3,5],\\
T_5=\d_5[1,4,3,2,5].
\end{array}
$$
Since $T_1=T_1^{-1}$, $T_2=T_2^{-1}$, $T_5=T_5^{-1}$, if $T_i:L_p\ra L_q$ is a lattice isomorphism, then $T_i:L_q\ra L_p$ is also an isomorphism for $i=1,2,5$. Since $T_3^{-1}=T_4$, if $T_3~(T_4):L_p\ra L_q$ is a lattice isomorphism, then $T_4~(T_3):L_q\ra L_p$ is also an isomorphism.

We set $L_i:=\{M^i_c(5),M^i_d(5)\}$, $i=1,\cdots,12$, and $C_i=\{M_n^i(5)\}$, $i=1,2,3,4$. Then it is easy to verify that
the following mappings are lattice isomorphisms:
\begin{align}\label{2.2.3}
\begin{array}{cl}
T_1:&L_2\ra L_3;\;(L_3\ra L_2);\;L_4\ra L_6;\;(L_6\ra L_4);\\
~&L_8\ra L_9;\;(L_9\ra L_8);\\
T_2:&L_1\ra L_7;\ (L_7\ra L_1);\;L_2\ra L_6;\;(L_6\ra L_2);\\
~&L_3\ra L_8;\;(L_8\ra L_3);\;L_4\ra L_9;\;(L_9\ra L_4);\\
T_3:&L_2\ra L_4;\;L_3\ra L_9;\;L_4\ra L_8;\;L_6\ra L_3;\\
~&L_7\ra L_1;\;L_8\ra L_2;\;L_9\ra L_6;L_{10}\ra L_{12};\\
T_4:&L_1\ra L_7;\;L_2\ra L_8;\;L_3\ra L_6;\;L_4\ra L_2;\;L_2\ra L_8;\\
~&L_6\ra L_9;\;L_8\ra L_4;\;L_9\ra L_3;\;L_{12}\ra L_{10};\\
T_5:&L_2\ra L_9;\;(L_9\ra L_2);\;L_3\ra L_4;\;(L_4\ra L_3);\\
~&L_6\ra L_8;\;(L_8\ra L_6);\;L_{10}\ra L_{12};\;(L_{12}\ra L_{10});\\
\end{array}
\end{align}

We conclude that
$$
\begin{array}{l}
L_2\eqsim L_3\eqsim L_4\eqsim L_6\eqsim L_8\eqsim L_9,\\
L_1\eqsim L_7,\\
L_{10}\eqsim L_{12}.
\end{array}
$$

We also have the following complement isomorphisms:
\begin{align}\label{2.2.4}
\begin{array}{cl}
T_1:&C_1\ra C_1;\;C_2\ra C_3;\;C_3\ra C_2;\;C_4\ra C_1;\\
T_2:&C_1\ra C_1;\;C_2\ra C_2;\;C_4\ra C_1;\\
T_3:&C_1\ra C_1;\;C_2\ra C_3;\;C_4\ra C_1;\\
T_4:&C_1\ra C_1;\;C_2\ra C_2;\;C_4\ra C_1;\\
T_5:&C_3\ra C_3;\;C_4\ra C_1.
\end{array}
\end{align}

Using (\ref{2.2.3}) and (\ref{2.2.4}), we can construct BTA's isomorphism. Say, $T_1:L_4\ra L_6$ is a lattice isomorphism and $T_1: C_4\ra C_1$ is a complement isomorphism. Then
$$T_1:\left(M_c^4(5),M_d^4(5),M_n^3(5)\right) \ra \left(M_c^6(5),M_d^6(5),M_n^1(5)\right)
$$
is a BTA isomorphism.
\end{exa}

\section{Decomposition of BTAs}

\subsection{Product}

\begin{dfn}\label{d5.2.1}
\begin{enumerate}
\item~~Let $L_i=(B_i,\sqcap_i,\sqcup_i,\1_i,\0_i)$, $i=1,2$ be two bounded lattices. Their product can be defined as $L:=L_1\times L_2=(B,\sqcap_p,\sqcup_p,\1_p,\0_p)$, where $B=B_1\times B_2$ is the Cartesian product of $B_1$ and $B_2$, and
\begin{align}\label{5.2.1}
\begin{array}{l}
(x_1,x_2)\sqcap_p (y_1,y_2):=(x_1\sqcap_1 y_1,x_2\sqcap_2 y_2),\\
(x_1,x_2)\sqcup_p (y_1,y_2):=(x_1\sqcup_1 y_1,x_2\sqcup_2 y_2),\\
\1_p=(\1_1,\1_2),\\
\0_p=(\0_1,\0_2).\\
\end{array}
\end{align}
\item~~ Let $A_i=(B_i,\sqcap_i,\sqcup_i,{~}'_i,\1_i,\0_i)$, $i=1,2$ be two BTAs. Their product $A=A_1\times A_2$ is a product lattice with
\begin{align}\label{5.2.2}
(x_1,x_2)'_p:=((x_1)'_1,(x_2)'_2),\quad x_1\in B_1,\;x_2\in B_2.
\end{align}
\end{enumerate}
\end{dfn}

When the elements in $B_i$ are expressed in vector form as $B_1=\{\d_p^i\;|\;i=1,\cdots,p\}$ and $B_2=\{\d_q^i\;|\;i=1,\cdots,q\}$, Their Cartesian product can be expressed as
\begin{align}\label{5.2.3}
\begin{array}{l}
B=B_1\times B_2\\
=\left\{\d_p^i\d_q^j=\d_{pq}^{(i-1)q+j}\;|\;i=1,\cdots,p;\;j=1,\cdots, q\right\}.
\end{array}
\end{align}

Using (\ref{5.2.3}), a straightforward computation shows the following:

\begin{prp}\label{p5.2.2} Let $A_i=(B_i,\sqcap_i,\sqcup_i, {~}'_i,\1_i,\0_i)$ $i=1,2$ be two BTAs, $|B_1|=p$ and $|B_2|=q$. Then the product algebra $A_p=A_1\times A_2$ has structure matrices of its operators as follows:
\begin{itemize}
\item[(i)]~~
\begin{align}\label{5.2.4}
M_c^p=M_c^1\left(I_{p^2}\otimes M_c^2\right)\left(I_p\otimes W_{[q,p]}\right).
\end{align}
\item[(ii)]~~
\begin{align}\label{5.2.5}
M_d^p=M_d^1\left(I_{p^2}\otimes M_d^2\right)\left(I_p\otimes W_{[q,p]}\right).
\end{align}
\item[(iii)]~~
\begin{align}\label{5.2.6}
M_n^p=M_n^1\left(I_{p}\otimes M_n^2\right).
\end{align}
\end{itemize}
\end{prp}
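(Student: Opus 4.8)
The plan is to verify (i)--(iii) by evaluating both sides on an arbitrary element (or pair of elements) of $B=B_1\times B_2$ written in vector form, and then to cancel the test vectors by their arbitrariness, exactly as in the proof of Theorem~\ref{t4.1.2}. Write a generic pair as $X=x_1x_2$ and $Y=y_1y_2$ with $x_1,y_1\in\D_p$ and $x_2,y_2\in\D_q$, using the identification $\d_p^i\d_q^j=\d_{pq}^{(i-1)q+j}$ from (\ref{5.2.3}). By the definitions (\ref{5.2.1})--(\ref{5.2.2}) and the definition of a structure matrix, establishing (i) and (iii) reduces to the identities
\begin{align*}
M_c^p\,x_1x_2y_1y_2=(M_c^1x_1y_1)(M_c^2x_2y_2),\qquad M_n^p\,x_1x_2=(M_n^1x_1)(M_n^2x_2),
\end{align*}
with (ii) being the verbatim $\sqcup$-analogue of the first.

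For the binary operators the crucial step is a reordering of factors. The argument string on the left is $x_1x_2y_1y_2$, while the component operators $M_c^1$ and $M_c^2$ must act on $x_1y_1$ and on $x_2y_2$ respectively; hence I first bring the string into the order $x_1y_1x_2y_2$. This is precisely the interchange of the two middle factors $x_2\in\D_q$ and $y_1\in\D_p$, which by Proposition~\ref{p3.1.6}(ii) is effected by $I_p\otimes W_{[q,p]}\otimes I_q$. Under the standing STP convention the trailing $I_q$ is appended automatically when this operator meets the length-$p^2q^2$ string, so it is recorded compactly as $I_p\otimes W_{[q,p]}$. Once $x_1y_1x_2y_2$ is produced, I strip off the inner operation by the Kronecker mixed-product rule, $(I_{p^2}\otimes M_c^2)\,x_1y_1x_2y_2=(x_1y_1)(M_c^2x_2y_2)$, and then the outer one by associativity of STP (Proposition~\ref{p3.1.3}(i)). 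Composing the three maps yields $M_c^p=M_c^1(I_{p^2}\otimes M_c^2)(I_p\otimes W_{[q,p]})$, and the identical computation with $M_d$ replacing $M_c$ gives (ii).

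The complement (iii) is easier, since the product complement acts componentwise on the single string $x_1x_2$ and no swap is required. Inserting $I_p\otimes M_n^2$ applies $M_n^2$ to the second block while leaving the first untouched, $(I_p\otimes M_n^2)\,x_1x_2=x_1(M_n^2x_2)$; associativity of STP then lets $M_n^1$ act on the first block, giving $M_n^1(I_p\otimes M_n^2)\,x_1x_2=(M_n^1x_1)(M_n^2x_2)$, which is exactly (iii).

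I expect the only genuinely delicate point to be the dimension bookkeeping in the reordering step: confirming that the correct swap is $W_{[q,p]}$ rather than $W_{[p,q]}$, and that the STP product silently supplies the $I_q$ that pads $I_p\otimes W_{[q,p]}$ up to the $p^2q^2\times p^2q^2$ matrix actually needed. After the orientation of the swap matrix and the STP dimension convention are fixed, everything else is a routine chain of associativity and mixed-product identities, and the arbitrariness of $x_1,x_2,y_1,y_2$ then removes the test vectors to leave the stated matrix equalities.
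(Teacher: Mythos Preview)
Your proposal is correct and follows exactly the approach implied by the paper, which merely states that ``a straightforward computation shows'' the result; your element-by-element verification via the swap $I_p\otimes W_{[q,p]}$ and the mixed-product/STP identities is precisely the computation the authors have in mind (and in fact carry out verbatim later in the proof of Lemma~\ref{l5.3.3}). The dimension bookkeeping you flag---the orientation of $W_{[q,p]}$ and the implicit $\otimes I_q$ padding supplied by STP---is handled correctly.
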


\subsection{Decomposition}

As the inverse problem of product, we consider the decomposition of a BTA, which is precisely defined as follows:

\begin{dfn}\label{d5.3.1} Let $A=(B,\sqcap,\sqcup,{~}',\1,\0)$ be a BTA, and $|B|=pq$. The decomposition problem of BTA is solvable, if there are two BTAs  $A_i=(B_i,\sqcap_i,\sqcup_i,{~}'_i,\1_i,\0_i)$, such that $A=A_1\times A_2$.
\end{dfn}

First, we give a lemma.

\begin{lem}\label{l5.3.2} Let $A_i=(B_i,\sqcap_i,\sqcup_i,{~}_i',\1_i,\0_i)$, $i=1,2$ be given. Where $A_1$ is a  BTA, and $A_2$ is only a set with two binary mappings and one unary mapping, and $\1_2,~\0_2\in B_2$. If there is a surjective mapping $\pi:B_1\ra B_2$, satisfying
\begin{itemize}
\item[(i)]~~
\begin{align}\label{5.3.1}
\pi(x\sqcap_1 y)=\pi(x)\sqcap_2 \pi(y);
\end{align}
\item[(ii)]~~
\begin{align}\label{5.3.2}
\pi(x\sqcup_1 y)=\pi(x)\sqcup_2 \pi(y);
\end{align}
\item[(iii)]~~
\begin{align}\label{5.3.3}
\pi(x'_1)=(\pi(x))'_2,
\end{align}
\end{itemize}
then $A_2$ is also a BTA.
\end{lem}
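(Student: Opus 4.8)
The plan is to exploit the surjectivity of $\pi$ to transport every defining identity of a BTA from $A_1$, where it is known to hold, onto $A_2$. The single structural fact used throughout is that, since $\pi$ is onto, every element of $B_2$ has the form $\pi(x)$ for some $x\in B_1$; hence any element-wise identity that survives application of $\pi$ must hold throughout $B_2$.

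First I would establish that $L_2=(B_2,\sqcap_2,\sqcup_2,\1_2,\0_2)$ is a bounded distributive lattice. Each lattice axiom of Definition \ref{d4.1.1} is an identity in the binary operations, and the generic transfer step is illustrated by associativity of $\sqcap_2$: given $u,v,w\in B_2$, pick $x,y,z$ with $u=\pi(x)$, $v=\pi(y)$, $w=\pi(z)$, and compute, using (i) repeatedly together with associativity of $\sqcap_1$ in $A_1$,
\[
(u\sqcap_2 v)\sqcap_2 w=\pi\big((x\sqcap_1 y)\sqcap_1 z\big)=\pi\big(x\sqcap_1 (y\sqcap_1 z)\big)=u\sqcap_2 (v\sqcap_2 w).
\]
Commutativity, the two absorption laws \eqref{4.1.1}, and the distributive laws \eqref{4.1.2} are transferred by the identical pattern, invoking (i) and (ii). For boundedness, the same computation gives $u\sqcap_2\pi(\1_1)=\pi(x\sqcap_1\1_1)=u$ and $u\sqcup_2\pi(\0_1)=u$ for every $u$, so $\pi(\1_1)$ and $\pi(\0_1)$ are the top and bottom of $L_2$; under the standing convention that $\pi$ carries the designated constants of $A_1$ to those of $A_2$ (i.e. $\pi(\1_1)=\1_2$, $\pi(\0_1)=\0_2$), these are exactly $\1_2$ and $\0_2$.

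Next I would verify that ${~}'_2$ is a complement on $L_2$ of the same type as ${~}'_1$. Whenever that type is defined by identities --- De Morgan, Kleene, DIC, and the Boolean complement all are (note $a\le b$ is the identity $a\sqcap b=a$, so Kleene's inequality \eqref{4.2.4} is equational) --- the laws transfer verbatim: for instance, reading off the De Morgan law through $\pi$ by (i)--(iii),
\[
(u\sqcup_2 v)'_2=\pi\big((x\sqcup_1 y)'_1\big)=\pi\big(x'_1\sqcap_1 y'_1\big)=u'_2\sqcap_2 v'_2 .
\]
Condition (iii) is precisely what allows ${~}'$ to pass through $\pi$, and the surjectivity argument then promotes each complement identity from $A_1$ to $A_2$, so $C_2=(B_2,{~}'_2,\1_2,\0_2)$ is a complement of the required type and $A_2$ is a BTA.

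The main obstacle is the pseudo complement and the Stone complement, since their definitions \eqref{4.2.5}--\eqref{4.2.6} are not stated as identities but as a join taken over the solution set of $x\sqcap y=\0$; a naive transfer fails in the ``largest element'' direction, because $\pi(x\sqcap_1 z)=\0_2$ does not force $x\sqcap_1 z=\0_1$ when $\pi$ is non-injective. I would resolve this by replacing the set-theoretic definition with the well-known \emph{equational} characterization of (Stone) pseudocomplemented distributive lattices --- e.g. $x\sqcap_1 (x\sqcap_1 y)'_1=x\sqcap_1 y'_1$, $x\sqcap_1 x'_1=\0_1$, $(\0_1)'_1=\1_1$, together with $x'_1\sqcup_1 x''_1=\1_1$ for the Stone case --- which hold in $A_1$ and, being identities, transfer to $A_2$ exactly as above. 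Equivalently, one observes that (iii) forces the kernel of $\pi$ to be a congruence of the full algebra $A_1$, so that $A_2\cong A_1/\ker\pi$ is a quotient, and each of the relevant classes of BTAs is closed under quotients; this yields a uniform, computation-free justification that $A_2$ inherits the type of $A_1$.
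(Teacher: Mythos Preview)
Your approach is essentially identical to the paper's: pick preimages under the surjection $\pi$, invoke the corresponding identity in $A_1$, and push it down to $A_2$ via (i)--(iii); the paper carries this out for associativity of $\sqcap_2$ and declares the remaining verifications similar. Your write-up is in fact more careful than the paper's, since you explicitly address boundedness and the non-equational nature of the pseudo/Stone complement, issues the paper's one-paragraph proof does not mention.
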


\begin{proof} We need to prove every properties of a BTA for $A_2$. Since all the proofs are similar, we prove associativity of $\sqcap_2$ only. Since $\pi$ is surjective, for any $u,v,w\in B_2$, we can find $x\in\pi^{-1}(u)$, $y\in\pi^{-1}(v)$, and $z\in \pi^{-1}(w)$. Then
\begin{align}\label{5.3.4}
(x\sqcap_1 y)\sqcap_1 z=x\sqcap_1(y\sqcap_1 z).
\end{align}
Mapping both sides of (\ref{5.3.4}) to $B_2$ by $\pi$ and using (\ref{5.3.1}) yield
$$
(u\sqcap_2 v)\sqcap w=u\sqcap_2 (v\sqcap_2 w).
$$
\end{proof}

\begin{lem}\label{l5.3.3} Let $A=(B,\sqcap,\sqcup,{~}',\1,\0)$ be a BTA, and $|B|=pq$. The decomposition problem is solvable, if and only if, there exist $M_c^1,~M_d^1 \in {\cal L}_{p\times p^2}$, $M_n^1\in  {\cal L}_{p\times p}$, $M_c^2,~M_d^2 \in {\cal L}_{q\times q^2}$, $M_n^2\in  {\cal L}_{q\times q}$, such that
\begin{itemize}
\item[(i)]~~$M_c$ decomposition:
\begin{align}\label{5.3.5}
\left(I_p\otimes \1_q^T\right)M_c=M_c^1\left(I_p\otimes \1_q^T\otimes I_{p}\otimes \1_q^T\right),
\end{align}
and
\begin{align}\label{5.3.6}
\left(\1^T_p\otimes I_q\right)M_c=M_c^2\left(\1^T_p\otimes I_q\otimes \1^T_{p}\otimes I_q\right),
\end{align}
\item[(ii)]~~$M_d$ decomposition:
\begin{align}\label{5.3.7}
\left(I_p\otimes \1_q^T\right)M_d=M_d^1\left(I_p\otimes \1_q^T\otimes I_{p}\otimes \1_q^T\right),
\end{align}
and
\begin{align}\label{5.3.8}
\left(\1^T_p\otimes I_q\right)M_d=M_d^2\left(\1^T_p\otimes I_q\otimes \1^T_{p}\otimes I_q\right),
\end{align}
\item[(iii)]~~$M_n$ decomposition:
\begin{align}\label{5.3.9}
\left(I_p\otimes \1_q^T\right)M_n=M_n^1\left(I_p\otimes \1_q^T\right),
\end{align}
\begin{align}\label{5.3.10}
\left(\1^T_p\otimes I_q\right)M_n=M_n^2\left(\1^T_p\otimes I_q\right).
\end{align}
\end{itemize}
\end{lem}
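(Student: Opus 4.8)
The plan is to read each of the six matrix identities as the statement that a canonical coordinate projection is a homomorphism, and then to run the two directions through Proposition~\ref{p2.1.2} (necessity) and Lemma~\ref{l5.3.2} (sufficiency). The first move is to pin down the two projection matrices. Under the identification (\ref{5.2.3}) we have $\d_{pq}^{(i-1)q+j}=\d_p^i\d_q^j$, and a one-line check gives $(I_p\otimes\1_q^T)\d_{pq}^{(i-1)q+j}=\d_p^i$ and $(\1_p^T\otimes I_q)\d_{pq}^{(i-1)q+j}=\d_q^j$. Hence $P_1:=I_p\otimes\1_q^T$ and $P_2:=\1_p^T\otimes I_q$ are exactly the structure matrices of the coordinate projections $\pi_1:B\ra B_1$ and $\pi_2:B\ra B_2$; both are surjective, with $\pi_1(\1)=\d_p^1$, $\pi_1(\0)=\d_p^p$ and similarly for $\pi_2$. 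Since $xy=x\otimes y$ for vectors, the mixed-product rule gives $(P_1\otimes P_1)xy=(P_1x)(P_1y)$, so evaluating (\ref{5.3.5}) on arbitrary $x,y\in\D_{pq}$ reads precisely $\pi_1(x\sqcap y)=\pi_1(x)\sqcap_1\pi_1(y)$; the same reading turns (\ref{5.3.6})--(\ref{5.3.10}) into the remaining homomorphism identities $\pi_i(x\sqcup y)=\pi_i(x)\sqcup_i\pi_i(y)$ and $\pi_i(x')=(\pi_i(x))'_i$, $i=1,2$.

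For necessity I would assume $A=A_1\times A_2$. By Definition~\ref{d5.3.1} the coordinate projections respect $\sqcap,\sqcup,{}'$ componentwise, so each $\pi_i$ is a BTA homomorphism. Applying Proposition~\ref{p2.1.2} with $M_{\pi_1}=P_1$ gives $P_1M_c=M_c^1P_1(I_{pq}\otimes P_1)$, and it remains to collapse $P_1(I_{pq}\otimes P_1)$ to $P_1\otimes P_1$. This rests on the general identity $M_\pi(I_m\otimes M_\pi)=M_\pi\otimes M_\pi$ valid for any structure matrix $M_\pi$ with $m$ columns: the STP definition forces the factor $M_\pi\otimes I$ (via the relevant $\lcm$), and the mixed-product rule then gives $(M_\pi\otimes I)(I\otimes M_\pi)=M_\pi\otimes M_\pi$. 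This produces (\ref{5.3.5}); the identical argument on $\sqcup$, on the unary ${}'$ (where no Kronecker square appears), and on $\pi_2$ yields (\ref{5.3.6})--(\ref{5.3.10}), with the factor matrices $M_c^i,M_d^i,M_n^i$ serving as the required witnesses.

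For sufficiency I would take the posited matrices $M_c^1,M_d^1,M_n^1$ and $M_c^2,M_d^2,M_n^2$ and use them to declare candidate operations $\sqcap_i,\sqcup_i,{}'_i$ on $B_1=\D_p$ and $B_2=\D_q$ (with $\1_i,\0_i$ the first and last basis vectors). The delicate point is that these candidates need not satisfy the BTA axioms by themselves, and this is exactly the gap that Lemma~\ref{l5.3.2} closes. By the first paragraph, identities (\ref{5.3.5}), (\ref{5.3.7}), (\ref{5.3.9}) say the surjection $\pi_1:A\ra A_1$ obeys (\ref{5.3.1})--(\ref{5.3.3}), so Lemma~\ref{l5.3.2} forces $A_1$ to be a BTA; likewise $A_2$ from (\ref{5.3.6}), (\ref{5.3.8}), (\ref{5.3.10}). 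Finally I would assemble the map $\phi$ with structure matrix $P_1*P_2$, i.e. $\phi(x)=(\pi_1(x),\pi_2(x))$: it is a BTA homomorphism because each coordinate is, and computing its columns gives $\Col_{(i-1)q+j}(P_1*P_2)=\d_p^i\ltimes\d_q^j=\d_{pq}^{(i-1)q+j}$, so $P_1*P_2=I_{pq}$. Thus $\phi$ is the identity under (\ref{5.2.3}), in particular a bijective homomorphism, hence an isomorphism, so that $A=A_1\times A_2$ exactly and the decomposition is solvable.

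The step I expect to be the main obstacle is the sufficiency direction, and specifically the claim that $A_1$ and $A_2$ are genuine BTAs rather than mere sets carrying three operations: nothing in the six identities constrains $M_c^1,M_d^1,M_n^1$ to satisfy associativity, absorption, distributivity and the complement laws on their own. The entire weight of this step is carried by Lemma~\ref{l5.3.2}, whose surjectivity hypothesis lets one pull every factor axiom back to the already-known axiom in $A$. Consequently the real work is not computation but bookkeeping: confirming that each of (\ref{5.3.5})--(\ref{5.3.10}) is literally the homomorphism condition demanded by that lemma, which the projection-matrix identification of the first paragraph supplies.
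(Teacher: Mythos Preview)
Your argument is correct, but the route differs from the paper's. The paper does not pass through the homomorphism characterisation at all; instead it reduces the lemma to the purely matrix-theoretic equivalence ``(\ref{5.2.4}) $\Leftrightarrow$ (\ref{5.3.5})+(\ref{5.3.6})'' (and the analogous pairs for $M_d$ and $M_n$). For the forward implication the paper multiplies (\ref{5.2.4}) on the left by $I_p\otimes\1_q^T$ and simplifies; for the reverse it uses the Khatri--Rao identity $\bigl[(I_p\otimes\1_q^T)M_c\bigr]*\bigl[(\1_p^T\otimes I_q)M_c\bigr]=M_c$ and checks column by column that the right-hand sides of (\ref{5.3.5}) and (\ref{5.3.6}) reassemble to the product formula. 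Your argument instead recognises $P_1=I_p\otimes\1_q^T$ and $P_2=\1_p^T\otimes I_q$ as the structure matrices of the coordinate projections, reads (\ref{5.3.5})--(\ref{5.3.10}) as the homomorphism identities of Proposition~\ref{p2.1.2}, and closes the sufficiency direction with Lemma~\ref{l5.3.2} together with $P_1*P_2=I_{pq}$. The payoff of your route is conceptual clarity: it makes explicit why the factor structures $A_1,A_2$ are genuine BTAs (the paper leaves that step to the placement of Lemma~\ref{l5.3.2}), and it avoids the swap-matrix and Khatri--Rao manipulations. The paper's route, on the other hand, stays entirely inside the product formulae (\ref{5.2.4})--(\ref{5.2.6}), which then feed straight into the computable criteria of Theorem~\ref{t5.3.4}.
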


\begin{proof} It is enough to prove that (i) (\ref{5.2.4}) $\Leftrightarrow$ (\ref{5.3.5})+(\ref{5.3.6}); (ii) (\ref{5.2.5}) $\Leftrightarrow$ (\ref{5.3.7})+(\ref{5.3.8}); (iii) (\ref{5.2.6}) $\Leftrightarrow$ (\ref{5.3.9})+(\ref{5.3.10}). We prove (i) only, the proofs of the two others are similar.

\begin{itemize}
\item~~  (\ref{5.2.4}) $\Rightarrow$ (\ref{5.3.5})+(\ref{5.3.6}):

Again, we prove  (\ref{5.2.4}) $\Rightarrow$ (\ref{5.3.5}) only.

Using (\ref{5.2.4}), we have
$$
\begin{array}{l}
\left(I_p\otimes \1^T_q\right)M_c\\
=\left(I_p\otimes \1^T_q\right)M_c^1\left(I_{p^2}\otimes M_c^2\right)\left(I_p\otimes W_{[q,p]}\right)\\
=\left(I_p\otimes \1^T_q\right)\left(M_c^1\otimes I_q\right)\left(I_{p^2}\otimes M_c^2\right)\left(I_p\otimes W_{[q,p]}\right)\\
=\left(M_c^1\otimes \1^T_q\right)\left(I_{p^2}\otimes M_c^2\right)\left(I_p\otimes W_{[q,p]}\right)\\
=\left(M_c^1\otimes \1^T_{q^2}\right)\left(I_p\otimes W_{[q,p]}\right).\\
\end{array}
$$
To prove (\ref{5.3.5}), it is enough to show that
\begin{align}\label{5.3.11}
\left(M_c^1\otimes \1^T_{q^2}\right)\left(I_p\otimes W_{[q,p]}\right)=
M_c^1\left(I_p\otimes \1_q^T\otimes I_{p}\otimes \1_q^T\right).
\end{align}
Let $x_1,y_1\in \D_p$ and $x_2,y_2\in \D_q$. Then
$$
\begin{array}{l}
\left(M_c^1\otimes \1^T_{q^2}\right)\left(I_p\otimes W_{[q,p]}\right)x_1x_2y_1y_2\\
=\left(M_c^1\otimes \1^T_{q^2}\right)x_1y_1x_2y_2=M_c^1x_1y_1,
\end{array}
$$
and
$$
\begin{array}{l}
M_c^1\left(I_p\otimes \1_q^T\otimes I_{p}\otimes \1_q^T\right))x_1x_2y_1y_2\\
=M_c^1x_1y_1.
\end{array}
$$
Since $x_1,y_1\in\D_p$ and $x_2,y_2\in\D_q$ are arbitrary, (\ref{5.3.11}) follows.

\item~~  (\ref{5.3.5})+(\ref{5.3.6}) $\Rightarrow$ (\ref{5.2.4}):

First, note that each column of $M_c$ can be expressed as
$$
\Col_i(M_c)=\d_p^{\a(i)}\d_q^{\b(i)},\quad i=1,\cdots,p^2q^2.
$$
Then
$$
\begin{array}{l}
(I_p\otimes \1_q^T)\Col_i(M_c)=\d_p^{\a(i)},\\
(\1^T_p\otimes I_q)\Col_i(M_c)=\d_q^{\b(i)},\quad \forall i.\\
\end{array}
$$
Then it is clear that
$$
\left[(I_p\otimes \1_q^T)M_c\right] *\left[(\1^T_p\otimes I_q)M_c\right]=M_c,
$$
where $*$ is Khatri-Rao product.

So to prove (\ref{5.3.5})+(\ref{5.3.6}) $\Rightarrow$ (\ref{5.2.4}) it is enough to show that
\begin{align}\label{5.3.12}
\begin{array}{l}
\left[M_c^1(I_p\otimes \1_q^T\otimes I_p\otimes\1_q^T)\right]*
\left[M_c^2(\1^T_p\otimes I_q\otimes \1^T_p\otimes I_q)\right]\\
=M_c^1(I_{p^2}\otimes M_c^2)(I_p\otimes W_{[q,p]}).
\end{array}
\end{align}

It is equivalent to that for any $x_1,y_1\in \D_p$ and $x_2,y_2\in \D_q$,
\begin{align}\label{5.3.13}
\begin{array}{l}
\left[M_c^1(I_p\otimes \1_q^T\otimes I_p\otimes\1_q^T)\right]x_1x_2y_1y_2\\
\ltimes \left[M_c^2(\1^T_p\otimes I_q\otimes \1^T_p\otimes I_q)\right]x_1x_2y_1y_2\\
=M_c^1(I_{p^2}\otimes M_c^2)\left(I_p\otimes W_{[q,p]}\right)x_1x_2y_1y_2.
\end{array}
\end{align}
Because $x_1x_2y_1y_2\in \D_{p^2q^2}$ be arbitrary. Say, $x_1x_2y_1y_2=\d_{p^2q^2}^s$. Then (\ref{5.3.13}) means
$$
\begin{array}{l}
\Col_s\left\{\left[M_c^1(I_p\otimes \1_q^T\otimes I_p\otimes\1_q^T)\right]\right\}\\
\ltimes \Col_s\left\{\left[M_c^2(\1^T_p\otimes I_q\otimes \1^T_p\otimes I_q)\right]\right\}\\
=\Col_s\left\{M_c^1(I_{p^2}\otimes M_c^2)(I_p\otimes W_{[q,p]}\right\}.\\
\end{array}
$$
The LHS of (\ref{5.3.13}) is:
$$
\begin{array}{ccl}
LHS&=&M^1_cx_1y_1M_c^2x_2y_2\\
~&=&M^1_c\left(I_{p^2}\otimes M^2_c\right)x_1y_1x_2y_2\\
~&=&M^1_c\left(I_{p^2}\otimes M^2_c\right)\left(I_p\otimes W_{[q,p]}\right)x_1x_2y_1y_2\\
~&=&RHS.
\end{array}
$$
\end{itemize}
\end{proof}

Now we are ready to present the main result about decomposition of BTAs.

\begin{thm}\label{t5.3.4} Given a BTA $A=(B,\sqcap,\sqcup,{~}',\1,\0)$ with $|B|=pq$, and its structure matrices of $\sqcap$, $\sqcup$, ${~}'$ as
$M_c$, $M_d$, and $M_n$ respectively. It is decomposable, if and only if, the following conditions are satisfied.

\begin{itemize}
\item[(i)]~~$M_c$ decomposition:
\begin{align}\label{5.3.14}
\begin{array}{l}
\left(I_p\otimes \1_q^T\right)M_c\left[I_{p^2q^2}-\frac{1}{q^2} \left(I_p\otimes \1_{q\times q}\otimes I_{p}\otimes \1_{q\times q}\right)\right]\\
=0.
\end{array}
\end{align}
and
\begin{align}\label{5.3.15}
\begin{array}{l}
\left(\1^T_p\otimes I_q\right)M_c\left[I_{p^2q^2}-\frac{1}{p^2} \left(\1_{p\times p}\otimes I_{q}\otimes \1_{p\times p}\otimes I_{q}\right)\right]\\
=0.
\end{array}
\end{align}
\item[(ii)]~~$M_d$ decomposition:
\begin{align}\label{5.3.16}
\begin{array}{l}
\left(I_p\otimes \1_q^T\right)M_d\left[I_{p^2q^2}-\frac{1}{q^2} \left(I_p\otimes \1_{q\times q}\otimes I_{p}\otimes \1_{q\times q}\right)\right]\\
=0.
\end{array}
\end{align}
and
\begin{align}\label{5.3.17}
\begin{array}{l}
\left(\1^T_p\otimes I_q\right)M_d\left[I_{p^2q^2}-\frac{1}{p^2} \left(\1_{p\times p}\otimes I_{q}\otimes \1_{p\times p}\otimes I_{q}\right)\right]\\
=0.
\end{array}
\end{align}
\item[(iii)]~~$M_n$ decomposition:
\begin{align}\label{5.3.18}
\left(I_p\otimes \1_q^T\right)M_n\left[I_{pq}-\frac{1}{q}\left(I_p\otimes \1_{q\times q}\right)\right]=0,
\end{align}
\begin{align}\label{5.3.19}
\left(\1^T_p\otimes I_q\right)M_n\left[I_{pq}-\frac{1}{p}\left(\1_{p\times p}\otimes I_q\right)\right]=0.
\end{align}
\end{itemize}

Moreover, if the above conditions are satisfied, the corresponding factor BTAs have their structure matrices as

\begin{itemize}
\item[(i)]~~$M_c$ decomposition:
\begin{align}\label{5.3.20}
M_c^1=\frac{1}{q^2}\left(I_p\otimes \1_q^T\right)M_c\left(I_p\otimes \1_{q}\otimes I_{p}\otimes \1_{q}\right),
\end{align}
and
\begin{align}\label{5.3.21}
M_c^2=\frac{1}{p^2}\left(\1^T_p\otimes I_q\right)M_c \left(\1_{p}\otimes I_{q}\otimes \1_{p}\otimes I_{q}\right).
\end{align}
\item[(ii)]~~$M_d$ decomposition:
\begin{align}\label{5.3.22}
M_d^1=\frac{1}{q^2}\left(I_p\otimes \1_q^T\right)M_d\left(I_p\otimes \1_{q}\otimes I_{p}\otimes \1_{q}\right),
\end{align}
and
\begin{align}\label{5.3.23}
M_d^2=\frac{1}{p^2}\left(\1^T_p\otimes I_q\right)M_d \left(\1_{p}\otimes I_{q}\otimes \1_{p}\otimes I_{q}\right).
\end{align}
\item[(iii)]~~$M_n$ decomposition:
\begin{align}\label{5.3.24}
M_n^1=\frac{1}{q}\left(I_p\otimes \1_q^T\right)M_n\left(I_p\otimes \1_{q}\right),
\end{align}
\begin{align}\label{5.3.25}
M_n^2=\frac{1}{p}\left(\1^T_p\otimes I_q\right)M_n\left(\1_{p}\otimes I_q\right).
\end{align}
\end{itemize}
\end{thm}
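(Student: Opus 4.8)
The plan is to build directly on Lemma~\ref{l5.3.3}, which already reduces solvability of the decomposition to the \emph{existence} of logical factor matrices $M_c^1,M_d^1,M_n^1$ and $M_c^2,M_d^2,M_n^2$ satisfying (\ref{5.3.5})--(\ref{5.3.10}). The whole point of Theorem~\ref{t5.3.4} is to convert that existence statement into conditions checkable on the given $M_c,M_d,M_n$ alone. My strategy is to observe that each equation in Lemma~\ref{l5.3.3} determines its factor matrix \emph{uniquely} by a projection/averaging formula, and that the recovered candidate is a genuine logical matrix satisfying the Lemma's equation precisely when the corresponding idempotency condition (\ref{5.3.14})--(\ref{5.3.19}) holds.

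First I would treat the $\sqcap$-factor; the $\sqcup$ and ${}'$ cases are identical with the obvious substitutions. The key algebraic fact is that, because $\1_q^T\1_q=q$,
\[
\left(I_p\ot\1_q^T\ot I_p\ot\1_q^T\right)\left(I_p\ot\1_q\ot I_p\ot\1_q\right)=q^2 I_{p^2},
\]
so right-multiplying (\ref{5.3.5}) by $(I_p\ot\1_q\ot I_p\ot\1_q)$ solves for $M_c^1$ and yields exactly formula (\ref{5.3.20}); likewise (\ref{5.3.6}) forces (\ref{5.3.21}) via $\1_p^T\1_p=p$, and (\ref{5.3.9}) forces (\ref{5.3.24}). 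This establishes uniqueness of the factors. Substituting the recovered $M_c^1$ back into (\ref{5.3.5}) and using the complementary identity
\[
\left(I_p\ot\1_q\ot I_p\ot\1_q\right)\left(I_p\ot\1_q^T\ot I_p\ot\1_q^T\right)=I_p\ot\1_{q\times q}\ot I_p\ot\1_{q\times q}
\]
rewrites (\ref{5.3.5}) as the idempotency relation (\ref{5.3.14}); the analogous manipulations produce (\ref{5.3.15})--(\ref{5.3.19}). This handles the ``only if'' direction and simultaneously pins down what the factors must be.

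For the ``if'' direction I would \emph{define} the candidate factors by (\ref{5.3.20})--(\ref{5.3.25}) and verify they meet the hypotheses of Lemma~\ref{l5.3.3}; the two identities above immediately turn (\ref{5.3.14}) back into (\ref{5.3.5}), and similarly for the rest. The step that needs genuine care --- and which I expect to be the main obstacle --- is checking that these averaged matrices are actually \emph{logical} (lie in ${\cal L}$), since a priori the expression $\tfrac{1}{q^2}(I_p\ot\1_q^T)M_c(I_p\ot\1_q\ot I_p\ot\1_q)$ is only a nonnegative real matrix. Writing $\Col_s(M_c)=\d_p^{\a(s)}\d_q^{\b(s)}$, the $(a,b)$-column of this candidate is the $q^{-2}$-average of the vectors $\d_p^{\a(s)}$ as $s$ ranges over the fiber indexed by the suppressed $q$-coordinates. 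Condition (\ref{5.3.14}) says precisely that $(I_p\ot\1_q^T)M_c$ is invariant under averaging over that fiber, which forces all the $\d_p^{\a(s)}$ in a fiber to coincide; hence the average collapses to a single $\d_p^{a}$ and the candidate is logical. Once logicality is secured, Lemma~\ref{l5.3.3} closes the argument.

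Finally, I would note that no separate verification that $A_1,A_2$ are BTAs is required, since that is already built into Lemma~\ref{l5.3.3} (through Lemma~\ref{l5.3.2}); the present theorem merely restates its hypotheses in directly verifiable form. The bulk of the work is bookkeeping with the swap and averaging matrices, and the one conceptually substantive point is the fiber-collapse argument that upgrades the real-matrix formulas (\ref{5.3.20})--(\ref{5.3.25}) to logical matrices.
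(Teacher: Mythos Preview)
Your approach is correct and coincides with the paper's: right-multiply each equation of Lemma~\ref{l5.3.3} by the appropriate ``inflation'' matrix to solve for the factor (yielding (\ref{5.3.20})--(\ref{5.3.25})), then substitute back to obtain the idempotency conditions (\ref{5.3.14})--(\ref{5.3.19}). In fact you are more careful than the paper, which only sketches the necessity direction for a single equation and does not explicitly address the converse or the logicality of the recovered factor matrices; your fiber-collapse argument filling that gap is sound.
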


\begin{proof}
We prove (\ref{5.3.14}).
Right multiplying both sides of (\ref{5.3.5}) by
$\left(I_p\otimes \1_q\otimes I_{p}\otimes \1_q\right)$, we have (\ref{5.3.20}) immediately. Hence (\ref{5.3.20}) is necessary for the existence of the decomposition. Plugging it into (\ref{5.3.5}) yields (\ref{5.3.14}).
\end{proof}

\begin{cor}\label{c5.3.5} Assume $A$ is a  De Morgan algebra (or Kleene algebra, or Stone algebra, or Boolean algebra) and $A=A_1\times A_2$. Then both $A_1$ and $A_2$ are also a  De Morgan algebra (or correspondingly, Kleene algebra, or Stone algebra, or Boolean algebra).
\end{cor}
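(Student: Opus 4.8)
The plan is to exploit the fact that, by Definition \ref{d5.2.1}, every ingredient of the product $A=A_1\times A_2$ is computed coordinatewise: the meet $\sqcap_p$, the join $\sqcup_p$, the complement ${~}'_p$, the bounds $\1_p=(\1_1,\1_2)$, $\0_p=(\0_1,\0_2)$, and hence the induced lattice order $\leq_p$. Consequently the two coordinate projections $\pi_i:A\ra A_i$, $\pi_i(x_1,x_2)=x_i$, are \emph{surjective} homomorphisms satisfying conditions (i)--(iii) of Lemma \ref{l5.3.2}, so each $A_i$ is at least a BTA; the task is to refine this to the specific type. Each of the four target algebras is a bounded distributive lattice on which the complement obeys either an identity (De Morgan's laws (\ref{4.2.3}), Stone's law (\ref{4.2.6}), the Boolean laws (\ref{4.2.7})) or a lattice inequality (Kleene's law (\ref{4.2.4})). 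Since such identities and inequalities hold in $A$ exactly when they hold in both coordinates, they will descend to each factor --- the same transfer principle already used in Lemma \ref{l5.3.2}.

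First I would record the lattice part. Distributivity (\ref{4.1.2}) and boundedness (\ref{4.1.3}) are equational in the lattice terms, so applying the surjective homomorphism $\pi_i$ (which carries $\1_p\mapsto\1_i$ and $\0_p\mapsto\0_i$) shows each $L_i=(B_i,\sqcap_i,\sqcup_i,\1_i,\0_i)$ is again a bounded distributive lattice; in particular assumption {\bf A1} is inherited by both factors.

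Next I would dispatch the three \emph{equational} complement cases in one stroke. For $u,v\in B_i$ choose lifts $x,y\in B$ with $\pi_i(x)=u$, $\pi_i(y)=v$ (possible by surjectivity). Applying $\pi_i$ to De Morgan's identities (\ref{4.2.3}), to Stone's identity (\ref{4.2.6}), or to the Boolean identities (\ref{4.2.7}) --- all valid in $A$ --- and using that $\pi_i$ commutes with $\sqcap$, $\sqcup$, ${~}'$ and preserves $\1,\0$, yields the same identity for $u,v$ in $A_i$. For Kleene's law (\ref{4.2.4}) I would instead use that $\leq_p$ is coordinatewise: since $x\sqcap x'\leq_p y\sqcup y'$ holds in $A$ for the lifts, reading off the $i$-th coordinate gives $u\sqcap_i u'_i\leq_i v\sqcup_i v'_i$. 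Thus De Morgan, Kleene and Boolean all transfer to each $A_i$.

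The main obstacle is the Stone case, because a pseudo complement (\ref{4.2.5}) is specified not by an identity but by a maximality (``largest $y$ with $x\sqcap y=\0$'') condition that a surjective homomorphism need not preserve. Here I would argue directly that the induced ${~}'_1$ is the pseudo complement on $A_1$ (the case $A_2$ being symmetric). Fix $u\in B_1$; taking $x=(u,\0_2)$ the identity $x\sqcap x'=\0$ of $A$ gives in its first coordinate $u\sqcap_1 u'_1=\0_1$, so $u'_1$ is a candidate. For maximality, let $z\in B_1$ satisfy $u\sqcap_1 z=\0_1$ and embed it as $(z,\0_2)$; then $(u,\0_2)\sqcap_p(z,\0_2)=\0_p$, so the pseudo complement property in $A$ forces $(z,\0_2)\leq_p(u,\0_2)'_p$. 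Since $(u,\0_2)'_p=(u'_1,(\0_2)'_2)$, the first coordinate yields $z\leq_1 u'_1$, whence $u'_1=\sqcup_1\{z\mid u\sqcap_1 z=\0_1\}$ is genuinely the pseudo complement. Combined with the already-transferred identity $x'\sqcup x''=\1$, each $A_i$ is a Stone algebra. The coordinate-embedding trick $z\mapsto(z,\0_2)$ is precisely what makes the non-equational maximality condition descend, and it completes all four cases.
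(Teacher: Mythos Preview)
Your argument is correct and follows the same line as the paper's: both use the surjective coordinate projections $\pi_i:A\ra A_i$ and invoke the transfer principle behind Lemma~\ref{l5.3.2}. The paper's proof is terser --- it writes $\pi_1,\pi_2$ via their structure matrices $M_{\pi_1}=I_p\otimes\1_q^T$ and $M_{\pi_2}=\1_p^T\otimes I_q$, observes from the proof of Lemma~\ref{l5.3.3} that conditions (\ref{5.3.1})--(\ref{5.3.3}) hold, and then simply cites Lemma~\ref{l5.3.2}.

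Where you go beyond the paper is in treating the non-equational ingredients explicitly. For Kleene you read the inequality (\ref{4.2.4}) off coordinatewise, and for Stone you supply the embedding $z\mapsto(z,\0_2)$ to verify directly that ${~}'_1$ is the pseudo complement on $A_1$, since the maximality condition in (\ref{4.2.5}) is not itself an identity and hence not covered verbatim by the proof of Lemma~\ref{l5.3.2}. The paper's one-line appeal to Lemma~\ref{l5.3.2} implicitly relies on the (true) fact that all four classes are varieties and hence closed under homomorphic images, but does not spell this out; your argument instead exploits the product structure itself and is therefore more self-contained.
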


\begin{proof} Assume $|A|=pq$, $|A_1|=p$, and $|A_2|=q$. Define $\pi_1:A\ra A_1$ (or $\pi_2:A\ra A_2$) by its structure matrix $M_{\pi_1}=I_p\otimes \1_q^T$ (correspondingly, $M_{\pi_2}=\1^T_p\otimes I_q$). From the proof of Lemma \ref{l5.3.3} one sees easily that the condition of Lemma \ref{l5.3.2} is satisfied by $\pi_1$ (or $\pi_2$), the conclusion follows from Lemma \ref{l5.3.2}.
\end{proof}

\section{Basis of Finite Universal Algebra}

\begin{dfn}\label{d6.1} Given a finite set $B$.
\begin{itemize}
\item[(i)]~~ A universal algebra (UA) $(B,T)$ is called a bare algebra, where $T=(k_1,\cdots,k_p)$ is a type which means there are $p$ mappings  $t_j:B^{k_j}\ra B$, $j=1,\cdots,p$. Moreover, there is no restriction(s) on $\{t_j\;|\;j=1,\cdots,p\}$.
\item[(ii)]~~
 A universal algebra $(B,T,R)$ is called a specified algebra, if $(B,T)$ is a bare algebra and $R=(r_1,\cdots,r_q)$  is a set of restrictions, where each $r_s\in R$ is an algebraic restriction on $\{t_j\;|\;j=1,\cdots,p\}$.
\item[(iii)]~~Consider a specified algebra $(B,T,R)$. If there exists a bare algebra $(B,T_0)$, such that $(B,T,R)=(B,T_0,R_0)$, then $(B,T_0)$ is called a generator of $(B,T,R)$.
\end{itemize}
\end{dfn}

\begin{rem}\label{d6.2}
An algebraic restriction is a relation about some compounded functions of  $\{t_j\;|\;j=1,\cdots,p\}$.
\end{rem}

\begin{exa}\label{e6.3}

\begin{itemize}
\item[(i)]~~Consider a UA $(G,T_0)$, where $T_0=(2,1,0)$.
Then $(G,T_0)$ is a bare algebra.

\item[(ii)]~~Consider a UA $(G,T,R_1)$, where $T=(2,1,0)$, and $t_1=*$, $t_2={~}^{-1}$, $t_3=e\in G$, and
$R_1=(r_1,r_2,r_3)$, where
$$
\begin{array}{cl}
r_1:& (x*y)*z=x*(y*z),\quad x,y,z\in G\\
r_2:&x*x^{-1}=x^{-1}x=e\\
r_3:&x*e=e*x=x.
\end{array}
$$
Then $(G,T,R_1)$ is a group.

\item[(iii)]~~Consider a UA $(G,T,R_2)$, where $G,~T$ are the same as in (ii), and $R_2=(r_1,r_2,r_3,r_4)$, and $r_1,~r_2,~r_3$ are the same as in $R_1$ and
$$
\begin{array}{cl}
r_4:& x*y=y*x,\quad x,y,\in G.
\end{array}
$$
Then   $(G,T,R_2)$ is an Abelian group.

\item[(iv)]~~$(G,T_0)$ is a generator of $(G,T,R_1)$ as well as of $(G,T,R_2)$.
\end{itemize}
\end{exa}

\begin{dfn}\label{d6.4} Two bare algebras $(B,T)$ and $(\tilde{B},\tilde{T})$ are said to be
\begin{itemize}
\item[(i)]~~
homomorphic, if $T=\tilde{T}=(k_1,\cdots,k_p)$, and there is a mapping $\pi:B\ra \tilde{B}$, such that
\begin{align}\label{6.1}
\begin{array}{l}
\pi(t_i(x_1,\cdots,x_{k_i}))=\tilde{t}_i\left(\pi(x_1),\cdots,\pi(x_{k_i})\right)\\
~~x_1,\cdots,x_{k_i}\in B,\;i=1,\cdots,p;
\end{array}
\end{align}
\item[(ii)]~~isomorphic, if  $\pi:B\ra \tilde{B}$ is a bijective homomorphism with its inverse $\pi^{-1}$ is also a homomorphism.
\end{itemize}
\end{dfn}

The following result comes from above definition immediately.

\begin{prp} \label{p6.5} Two specified algebras are homomorphic (or isomorphic) if and only if their generators are homomorphic (correspondingly, isomorphic ).
\end{prp}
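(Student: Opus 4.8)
The plan is to unwind the definitions and observe that the notion of homomorphism introduced in Definition \ref{d6.4} refers only to the operations $t_1,\ldots,t_p$ through the identity (\ref{6.1}), and never to the restrictions $R$. The generator of a specified algebra $(B,T,R)$, introduced in Definition \ref{d6.1}, is by construction the bare algebra $(B,T_0)$ sharing the same underlying set $B$ and the same operations; passing to the generator merely deletes the list of restrictions while leaving the operations untouched. This is the whole content of the statement.

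First I would fix two specified algebras $(B,T,R)$ and $(\tilde{B},\tilde{T},\tilde{R})$ together with a map $\pi:B\ra\tilde{B}$. By definition $\pi$ is a homomorphism of the specified algebras precisely when $T=\tilde{T}$ and (\ref{6.1}) holds for every $i=1,\cdots,p$. Since the generators $(B,T_0)$ and $(\tilde{B},\tilde{T}_0)$ carry exactly the same types and exactly the same operations as the specified algebras, the requirement that $\pi$ be a homomorphism of the generators is, term by term, the identical system of identities. Hence $\pi$ is a homomorphism of the specified algebras if and only if it is a homomorphism of their generators, which settles the homomorphic case.

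For the isomorphic case I would simply adjoin the two extra requirements of Definition \ref{d6.4}(ii): that $\pi$ be bijective and that $\pi^{-1}$ again satisfy (\ref{6.1}). Both conditions mention only the underlying sets and the operations, so they transfer verbatim between a specified algebra and its generator. Therefore $\pi$ is an isomorphism of the specified algebras exactly when it is an isomorphism of the generators, and the isomorphic case follows at once.

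There is no substantial obstacle; the single point requiring care is to make explicit that ``homomorphism of specified algebras'' is to be read through the same equation (\ref{6.1}) as for bare algebras, i.e.\ that the restrictions $R$ and $\tilde{R}$ are inert with respect to the defining identities. Once this is acknowledged the equivalence is immediate, which is why the statement can be asserted as following directly from the definitions.
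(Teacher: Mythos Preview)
Your proposal is correct and matches the paper's own treatment: the paper offers no proof at all, merely stating that the result ``comes from above definition immediately,'' and your unpacking of that remark---that the homomorphism condition (\ref{6.1}) involves only the operations and is blind to the restrictions $R$---is exactly the intended reasoning. Your explicit acknowledgement that Definition~\ref{d6.4} is stated only for bare algebras, and hence must be read the same way for specified algebras, is the one point worth making, and you make it.
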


Hence, searching a condense generator is very meaningful. The following example shows a condense generator may exist.

\begin{exa}\label{e6.6}
Consider a specified UA $(B,T,R)$, where $B=\{0,1\}$ and $T=\{t_1,\cdots,t_p\}$ are some logical functions, $R=\{r_1,\cdots,r_q\}$ are some restrictions. Then it is easy to find a generator as
$(B,T_0)$, where $T_0=\{t_1,t_2,t_3)$, with $t_1=\wedge$, $t_2=\vee$, and $t_3=\neg$. (or even $T_0=\{t_1,t_3\}$ or $T_0=\{t_2,t_3\}$) because $T_0$ is an adequate set i.e., it can generate any logical functions \cite{ham88}.
\end{exa}

Similar to logical case, we try to find a general generator for any finite university. Let $B$ be a finite set with $|B|=k$. Denote by $U(k)$ the
set of unary mappings. Then it is easy to figure out that $|U(k)|=k^k$.

\begin{prp}\label{p6.7} Let  $(B,T,R)$ be a specified algebra with $|B|=k<\infty$, and $\1,\0\in B$.
Then $(B,T_0)$ is a universal generator, where
\begin{align}\label{6.2}
T_0=\{\sqcup,\sqcap\}\bigcup U(k),
\end{align}
and
$$
\begin{array}{ll}
\1 \sqcup x=1&\1\sqcap x=x\\
\0\sqcup x=x&\0\sqcap x=0.
\end{array}
$$
\end{prp}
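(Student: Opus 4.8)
The plan is to reduce the statement to a \emph{functional completeness} assertion and then settle it by a disjunctive-normal-form construction. By Definition~\ref{d6.1}(iii), saying that $(B,T_0)$ is a universal generator means that for \emph{every} specified algebra $(B,T,R)$ each operation $t_j:B^{k_j}\ra B$ can be recovered as a compound function of the operations in $T_0$; since the $t_j$ range over arbitrary finitary operations on $B$, it suffices to show that every map $f:B^n\ra B$ (for every $n$) is a composition of operations drawn from $T_0=\{\sqcup,\sqcap\}\bigcup U(k)$. Once this is done, the restriction set $R_0$ witnessing the generation is obtained by adjoining to $R$ the defining relations expressing each $t_j$ through $T_0$.

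First I would isolate the elementary building blocks, all of which lie in $U(k)$ precisely because $U(k)$ consists of \emph{all} unary maps on $B$. For each $a\in B$ let $\chi_a\in U(k)$ be the indicator with $\chi_a(a)=\1$ and $\chi_a(x)=\0$ for $x\neq a$; for each $c\in B$ let $g_c\in U(k)$ be the injector with $g_c(\1)=c$ and $g_c(\0)=\0$. With these in hand I would form, for each tuple $\mathbf{a}=(a_1,\dots,a_n)\in B^n$, the min-term
\begin{align*}
P_{\mathbf{a}}(x_1,\dots,x_n):=\chi_{a_1}(x_1)\sqcap\cdots\sqcap\chi_{a_n}(x_n).
\end{align*}
Using $\1\sqcap\1=\1$ and $\0\sqcap y=\0$, both instances of the stated boundary identities, $P_{\mathbf{a}}$ takes the value $\1$ when $(x_1,\dots,x_n)=\mathbf{a}$ and the value $\0$ otherwise.

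The heart of the construction is then the normal form
\begin{align*}
f(x_1,\dots,x_n)=\bigsqcup_{\mathbf{a}\in B^n} g_{f(\mathbf{a})}\bigl(P_{\mathbf{a}}(x_1,\dots,x_n)\bigr),
\end{align*}
which is manifestly a composition of operations from $T_0$. To verify it I would fix an input $\mathbf{x}$ and observe that exactly one min-term, namely $P_{\mathbf{x}}$, equals $\1$, so the corresponding term is $g_{f(\mathbf{x})}(\1)=f(\mathbf{x})$, while every other term is $g_{f(\mathbf{a})}(\0)=\0$; the join then collapses to $f(\mathbf{x})$ via $\0\sqcup y=y$. This exhibits $f$ in the clone generated by $T_0$, establishing functional completeness and hence the proposition.

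The step I expect to require the most care is the verification that the normal form evaluates correctly from the \emph{sole} data available, namely the four boundary identities together with the associativity and commutativity of $\sqcup$ and $\sqcap$ (which hold because these are lattice operations). Two points must be pinned down: that the iterated $\sqcap$ remains $\{\0,\1\}$-valued and vanishes as soon as a single factor is $\0$, so that the min-terms are mutually exclusive and at most one term of the join is nonzero; and that the iterated $\sqcup$ is genuinely neutralized by the $\0$-valued terms regardless of their order, which is exactly where commutativity and associativity of $\sqcup$ enter. No distributivity or any finer lattice structure is needed, which explains why the same $T_0$ serves as a universal generator uniformly for every finite $B$.
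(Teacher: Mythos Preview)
Your proof is correct and follows essentially the same disjunctive-normal-form strategy as the paper: both build indicator unary maps (your $\chi_a$ is the paper's $\triangleright_a$) to single out each input tuple and then recover the output value via one further unary map, relying on the boundary identities for $\1,\0$ together with associativity and commutativity of $\sqcup,\sqcap$. The only difference is that you place indicators on all $n$ coordinates and post-compose with an injector $g_{f(\mathbf a)}$, whereas the paper places indicators on the first $s-1$ coordinates only and applies to $x_s$ the unary map whose structure matrix is the corresponding $k\times k$ block of $M_f$; this is a cosmetic variation (yielding $k^n$ versus $k^{s-1}$ joinands) rather than a genuinely different method.
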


\begin{proof} We have only to prove that each possible mapping $f:B^s\ra B$ can be expressed as a compounded function of $t\in T_0$. Using vector expression of elements in $B$ with $\1\sim \d_k^1$ and $\0\sim \d_k^k$, we can find the structure matrix of $f$, denoted by $M_f\in {\cal L}_{k\times k^s}$. Now split $M_f$ into $k^{s-1}$ blocks as
$$
M_f=[M_1,M_2,\cdots,M_{k^{s-1}}],
$$
where $M_j\in {\cal L}_{k\times k}$. Define a set of unary mappings $t_j$ by $M_j$. That is, $t_j$ has its structure matrix $M_j$, $j=1,\cdots,k^{s-1}$. Define another set of unary mappings $\triangleright_i$, $i=1,\cdots,k$ as as
$$
\triangleright_i(\d_k^j)=
\begin{cases}
\d_k^1,\quad j=i,\\
\d_k^k,\quad j\neq i.
\end{cases}
$$
Then it is easy to check that
\begin{align}\label{6.3}
\begin{array}{l}
f(x_1,\cdots,x_s)=\sqcup_{i_1=1}^k\sqcup_{i_2=1}^k\cdots \sqcup_{i_{s-1}=1}^k\\
\triangleright_{i_1}(x_1)\sqcap \triangleright_{i_2}(x_2)\sqcap\cdots \sqcap \triangleright_{i_{s-1}}(x_{s-1})\\
\sqcap
t_{\mu(i_1,\cdots,i_{s-1})}(x_s),
\end{array}
\end{align}
where
$$
\begin{array}{ccl}
\mu(i_1,\cdots,i_{s-1})&=&(i_1-1)k^{s-1}+(i_2-1)k^{s-2}\\
~&~&+\cdots+(i_{s-2}-1)k+i_{s-1}.
\end{array}
$$
\end{proof}

To simplify the generator $T_0$ we express
$$
U(k)=S(k)\bigcup V(k),
$$
where $S(k)$ is the set of mappings, which have nonsingular structure matrices and $V(k)$ is the set of mappings, which have singular structure matrices. It is clear that $|S(k)|=k!$ and $|V(k)|=k^k-k!$.

First we simplify $S(k)$: Let $t\in S(k)$. Then $M_t$ is a permutation matrix. Hence there exists a $\sigma\in {\bf S}_k$ such that $M_t=M_{\sigma}$.
It is well known that \cite{hun74} ${\bf S}_k$ has a set of generators as
$$
\{(1,2), (1,2,\cdots,k)\}.
$$
Hence, we need only $\{t_i\;|\; i=1,2\}$ as a set of generators for $S(k)$, where $t_j$ has $M_{\sigma_i}$ as its structure matrix and $\sigma_1=(1,2)$, $\sigma_2=(1,2,\cdots,n)$. Note that now we can reduce the number of generators in $S(k)$ from $k!$ to $2$. The two elements are:
\begin{align}\label{6.4}
\begin{array}{l}
\Sigma_1=\d_k[2,1,3,\cdots,k];\\
\Sigma_2=\d_k[2,3,\cdots,k,1].
\end{array}
\end{align}

Next, we simplify $V(k)$: Since $t$ in $V(k)$ is singular, there are at most $k-1$ rows of $M_t$ containing nonzero entries (i.e., $1$). Denote the number of $1$ in different rows by $r:=(r(1)\geq r(2)\geq \cdots\geq r(k-1))$, then
$$
\dsum_{j=1}^{k-1}r(j)=k.
$$
Since we have already constructed generator for $S(k)$, so $M_t$ can be used to generate all singular $M_{t'}$ by row or column permutation. That is, if $M_{t'}$ has the same $r=(r(1)\geq r(2)\geq\cdots\geq r(k-1))$ as that of $M_t$, it can be generated from $M_t$ with some $M_s$, where $s\in S(k)$. The number of $t$, which have different ordered set $r=(r(1)\geq r(2)\geq \cdots\geq r(k-1))$.
First, assume $\rank(M_t)=k-1$. Ignoring the orders of rows and columns, we have unique $M_t$ as
\begin{align}\label{6.5}
M_t=\Theta_{k-1}=\d_k[1,1,2,3,\cdots,k-1].
\end{align}
Next, we assume $\rank(M_t)=k-2$. Ignoring the orders of rows and columns, we have two $M_t$ as
$$
\begin{array}{l}
\Theta_{k-2}^1=\d_k[1,1,1,2,3,\cdots,k-2];\\
\Theta_{k-2}^2=\d_k[1,1,2,2,3,\cdots,k-2].\\
\end{array}
$$
A straightforward computation shows that
$$
\begin{array}{l}
\Theta_{k-2}^1=\Theta_{k-1}\Theta_{k-1};\\
\Theta_{k-2}^2=\Theta_{k-1}\d_k[1,2,3,3,4,\cdots,k-2].\\
\end{array}
$$
Ignoring the order of rows, it is clear that
$$
\d_k[1,2,3,3,4,\cdots,k-2]\sim \Theta_{k-1},
$$
where $A\sim B$ means one can be obtained from another by row/column permutations.
Hence the $M_t$ with $\rank(M_t)=k-2$ can be generated by $\Theta_{k-1}$ with $S(k)$.

Next, we prove that all singular logical matrices $M_t\in {\cal L}_{k\times k}$ can be generated by $\Theta_{k-1}$ with $S(k)$. It is enough to prove that $M_t\in {\cal L}_{k\times k}$ with $\rank(M_t)=s$ can be generated by all $M_t\in {\cal L}_{k\times k}$ with $\rank(M_t)=s+1$ with $S(k)$. We prove this by mathematical induction. We already know it is true for $s=k-2$, Now assume it is true for $s=r$ for $r<k-2$. That is, all $M_t$ with $\rank(M_t)=r$ has been generated. Now assume a special $M_t$ with $\rank(M_t)=r-1$ is given as
$$
\Theta_{r-1}=\d_k\left[\underbrace{1,\cdots,1}_{\a_1},\underbrace{2,\cdots,2}_{\a_2},\cdots,\underbrace{r-1,\cdots,r-1}_{\a_{r-1}}\right],
$$
where
$$
\a_1\geq \a_2\geq \cdots\geq \a_{r-1}>0,
$$
and
$$
\dsum_{i=1}^{r-1}\a_i=k.
$$
Note that $\a_1\geq 2$, otherwise, $\dsum_{i=1}^{r-1}\a_i=r-1<k$.
We construct
$$
\Theta^1_{r}=\d_k\left[\underbrace{1,\cdots,1}_{\b_1},\underbrace{2,\cdots,2}_{\b_2},\cdots,\underbrace{r,\cdots,r}_{\b_{r}}\right],
$$
where
$$
\b_1\geq \b_2\geq \cdots\geq \b_{r}>0,
$$
and
$$
\dsum_{i=1}^{r}\b_i=k.
$$
We also have $\b_1\geq 2$.
Construct
$$
\begin{array}{l}
\Theta^2_{r}=\d_k\left[\underbrace{1,\cdots,1}_{\a_1-1},2,\underbrace{\b_1+1,\cdots,\b_1+1}_{\a_2},\right.\\
\underbrace{\b_1+\b_2+1,\cdots,\b_1+\b_2+1}_{\a_3},\cdots,\\
\left.\underbrace{\b_1+\cdots+\b_{r-1}+1,\cdots,\b_1+\cdots+\b_{r-1}+1}_{\a_{r-1}}\right].
\end{array}
$$
A straightforward computation shows that both $\Theta^1_r,~\Theta^2_r\in {\cal L}_{k\times k}$ with $\rank(\Theta^1_r)=\rank(\Theta^2_r)=r$, and
$$
\Theta_{r-1}=\Theta^1_{r}\Theta^2_{r}.
$$
We conclude that $V(k)$ can be generated by  $S(k)$ and $\Theta_{k-1}$, which is defined by (\ref{6.5}). Hence we have the following general generator for any finite universal algebra.

\begin{thm}\label{t6.8} Let $(B,T)$ be a finite universal algebra with $|B|=k<\infty$. Then it has a universal generator as
$(B,T_0)$, where $T_0=(2,2,1,1,1)$ with $t_1=\sqcap$, $t_2=\sqcup$, $t_3=t_{\Sigma_1}$, $t_4=t_{\Sigma_2}$, and $t_5=t_{\Theta_{k-1}}$.
\end{thm}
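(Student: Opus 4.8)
The plan is to collect the three reduction steps already carried out in the discussion preceding the statement and to observe that they account exactly for the five operators in $T_0$. The starting point is Proposition \ref{p6.7}: the set $\{\sqcap,\sqcup\}\cup U(k)$ is \emph{already} a universal generator, because the explicit formula (\ref{6.3}) writes an arbitrary $f:B^s\ra B$ as a composition built from $\sqcap$, $\sqcup$, and unary maps drawn from $U(k)$. Hence it suffices to show that the whole of $U(k)$ can in turn be generated by the three unary operators $t_{\Sigma_1}$, $t_{\Sigma_2}$, $t_{\Theta_{k-1}}$ under composition, which on the level of structure matrices is just the (ordinary, since all factors are $k\times k$) product of logical matrices.

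First I would split $U(k)=S(k)\cup V(k)$ into the permutation part and the singular part. For $S(k)$ the argument is group-theoretic: every $t\in S(k)$ has a permutation structure matrix $M_\sigma$, and since ${\bf S}_k$ is generated by the transposition $(1,2)$ and the cycle $(1,2,\cdots,k)$, every $M_\sigma$ is a product of copies of $\Sigma_1$ and $\Sigma_2$ from (\ref{6.4}); thus $t_{\Sigma_1},t_{\Sigma_2}$ generate all of $S(k)$.

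Next I would treat $V(k)$ by descending induction on $\rank(M_t)$. Because $S(k)$ is already available, it is enough to generate one representative in each row/column-permutation class, i.e. one $M_t$ per admissible multiplicity pattern $r=(r(1)\ge\cdots\ge r(k-1))$. The base case is rank $k-1$, where the unique representative is $\Theta_{k-1}$ of (\ref{6.5}); the inductive step is the factorization $\Theta_{r-1}=\Theta^1_r\Theta^2_r$, in which the explicitly written $\Theta^1_r,\Theta^2_r$ both have rank $r$ and are therefore available by the induction hypothesis (with $S(k)$ supplying the permutations that fix the orders of rows and columns). Running the induction down from $k-1$ shows that $S(k)\cup\{\Theta_{k-1}\}$ generates every singular logical matrix, so $t_{\Sigma_1},t_{\Sigma_2},t_{\Theta_{k-1}}$ generate all of $V(k)$.

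Combining the two parts, the three unary operators generate $U(k)$, and adjoining $\sqcap,\sqcup$ recovers the generator of Proposition \ref{p6.7}; hence $(B,T_0)$ with $T_0=(2,2,1,1,1)$ is a universal generator. I expect the $V(k)$ step to be the main obstacle: one must verify that the constructed $\Theta^1_r,\Theta^2_r$ genuinely have rank $r$ and that their product reproduces $\Theta_{r-1}$ up to the row/column permutations furnished by $S(k)$. This rank-lowering identity is what drives the entire reduction, whereas the $S(k)$ simplification and the final assembly are routine.
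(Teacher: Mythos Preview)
Your proposal is correct and mirrors the paper's own argument essentially step for step: the paper proves Theorem \ref{t6.8} in the discussion immediately preceding it by invoking Proposition \ref{p6.7}, splitting $U(k)=S(k)\cup V(k)$, reducing $S(k)$ to $\{\Sigma_1,\Sigma_2\}$ via the standard two-element generating set of ${\bf S}_k$, and handling $V(k)$ by the same descending rank induction with the factorization $\Theta_{r-1}=\Theta^1_r\Theta^2_r$ that you outline. Your identification of the $V(k)$ step as the crux is also apt, since that is where the paper spends most of its effort.
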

Note that $t_{\Sigma_1}$ means a mapping with $\Sigma_1$ as its structure matrix, etc.

\section{Concluding Remarks}

 Starting from Boole's work, BA has been established as a fundamental tool for logic and has been used to computer science and other branches of discrete mathematics. But some useful mathematical objects can not be classified as BAs. Hence many other BTAs have been suggested and developed.
This paper provides a systematic matrix description for finite BTAs. The structure matrices for most finite BTLs and CAs are presented. Using them the     homomorphisms and isomorphisms of BTAs are also investigated.

As a main result, the decomposition of finite BTAs into a product of two BTAs is discussed in detail. A straightforward verifiable necessary and sufficient condition is obtained.

As another application, a set of BTAs with free complements is constructed as a universal generator for finite universal algebras.

There are many problems remaining for further study. We are confident that the matrix expression is a powerful tool for investigating finite BTAs.

\end{document}